\documentclass[10pt]{amsart}
\usepackage[
hmarginratio={1:1},     % equal left and right margins
vmarginratio={1:1},     % equal top and bottom margins
textwidth=400pt,        % new text width
textheight=50.5pc,
heightrounded,          % always useful
]{geometry}
\usepackage{mathrsfs}
\usepackage{epsfig}
\usepackage[dvipsnames]{xcolor}
\usepackage{graphicx}
\usepackage{amsmath}
\usepackage{amsfonts}
\usepackage{amssymb}
\usepackage{bbm}
\usepackage{bm}
\usepackage{amsthm}
\usepackage{amscd}
\usepackage{verbatim}
\usepackage{tikz}
\usepackage{lmodern}
\usepackage[T1]{fontenc} 
\usepackage[autostyle]{csquotes}
\usepackage{ytableau}
\usepackage{enumitem}
\usetikzlibrary{arrows}
\usepackage{multicol}

\usepackage{aliascnt}
\usepackage[hidelinks]{hyperref}
\usepackage[nameinlink,noabbrev]{cleveref}

\tikzset{
vertex/.style={circle,draw,minimum size=1.5em},
edge/.style={->,> = latex'}
}

\usepackage{mathtools}

\theoremstyle{plain}
\newtheorem{theorem}{Theorem}[section]

\newaliascnt{proposition}{theorem}
\newtheorem{proposition}[proposition]{Proposition}
\aliascntresetthe{proposition}

\newaliascnt{corollary}{theorem}
\newtheorem{corollary}[corollary]{Corollary}
\aliascntresetthe{corollary}

\newaliascnt{lemma}{theorem}
\newtheorem{lemma}[lemma]{Lemma}
\aliascntresetthe{lemma}

\theoremstyle{definition}

\newaliascnt{definition}{theorem}

\aliascntresetthe{definition}

\newaliascnt{remark}{theorem}
\newtheorem{remark}[remark]{Remark}
\aliascntresetthe{remark}

\newaliascnt{example}{theorem}

\aliascntresetthe{example}

\numberwithin{equation}{section}

\DeclareMathAlphabet{\mathpzc}{OT1}{pzc}{m}{it}

\definecolor{MyBlue1}{RGB}{229, 119, 167}
\definecolor{MyBlue2}{RGB}{61, 101, 165}
\definecolor{MyBlue3}{RGB}{124, 161, 204}

\newcommand{\norm}[1]{\| #1 \|}

\usepackage{marginnote,ragged2e}
\let\oldenumerate=\enumerate
\def\enumerate{
\oldenumerate
\setlength{\itemsep}{5pt}
}
\let\olditemize=\itemize
\def\itemize{
\olditemize
\setlength{\itemsep}{5pt}
}

\setlist[enumerate]{leftmargin=2.5em}

\begin{document}

\title{A general framework for inequalities on simple graphs}%[Bounds for the spectral radius and energy of graphs via Schur-convexity of random vector norms]{Bounds for the spectral radius and energy of graphs via Schur-convexity of random vector norms}
\author[Bouthat]{Ludovick Bouthat}
\author[Ch\'avez]{\'Angel Ch\'avez}
\author[Sheng]{Sam Sheng}

\address{Département de mathématiques et statistique, Université Laval, 2325 Rue de l'Université, Québec, QC G1V 0A6} 
\email{ludovick.bouthat.1@ulaval.ca}

\address{Department of Mathematics and Computer Science, Davidson College, 209 Ridge Rd, Box 5000, Davidson, NC 28035} 
\email{anchavez@davidson.edu}

\address{Department of Economics, Columbia University, 1022 International Affairs Building, Mail Code 3308, 420 West 118th Street, New York, NY 10027} 
\email{zs2819@columbia.edu}

%\begin{abstract}
%The \emph{random vector norms} of a graph are the absolute moments of sums of iid random variables weighted by the eigenvalues of the graph. Schur-convexity yields sharp bounds for the random vector norms of a graph in terms of its spectral radius and graph energy. Random vector norms are induced by an underlying probability distribution, and changing the underlying probability distribution yields different applications. For instance, an exponential distribution yields norms that can distinguish between noncospectral singularly cospectral graphs. A uniform distribution allows us to establish  lower bounds for the spectral radius of triangle- and square-free graphs. Finally, a Bernoulli distribution allows us to establish bounds for the spectral radius and energy of a graph in terms of its number of vertices, edges, triangles and squares. All inequalities we establish are sharp.
%\end{abstract}

\begin{abstract}
A general framework is developed for deriving sharp inequalities on simple graphs from majorization and Schur-convexity. After establishing majorization relations between the spectrum of an arbitrary graph and the spectra of the complete, complete bipartite, and matching graphs, it is shown that every positive Schur-convex spectral functional yields several sharp inequalities relating $\lambda_1$, $|\lambda_n|$, and $\|G\|_\ast$. This reduces the problem of proving graph inequalities to the choice of a suitable Schur-convex function. This optimization problem is then studied within the family of random vector norms, whose moment and cumulant expansions connect the framework to the numbers of closed walks. This yields new sharp results, recovers classical inequalities from a unified viewpoint, and produces further bounds in settings such as triangle-free and square-free graphs.
\end{abstract}

\maketitle

\section{Introduction}

\subsection{Foreword}

The spectrum of a graph encodes a remarkable amount of structural information, from connectivity to extremal behavior. It is therefore natural to study graph parameters that summarize spectral information in a single quantity. Among the most classical examples are norms built from the adjacency matrix, which have played a central role in spectral graph theory for several decades.

Throughout this paper, all graphs are assumed to be simple. If $G$ is a graph of order $n$, its \emph{singular values} are defined as the singular values of its adjacency matrix $A(G)$, which we denote by
\[
\sigma_1 \geq \sigma_2 \geq \cdots \geq \sigma_n.
\]
Since $A(G)$ is always real and symmetric, it is a normal matrix, and its singular values coincide with the absolute values of its eigenvalues.

Several classical norms of graphs are defined in terms of this singular spectrum. For instance, the \emph{energy} of $G$ (also known as the \emph{nuclear norm} or \emph{trace norm}) is the quantity
\[
\norm{G}_{\ast} = \sigma_1 + \sigma_2 + \cdots + \sigma_n,
\]
introduced by Gutman in 1978 \cite{Gutman1}. The \emph{spectral norm} (or spectral radius) is $\norm{G} = \sigma_1$. More generally, one may consider the Schatten $p$-norms for $p\geq 1$:
\[
\norm{G}_{S_p} = \left(\sigma_1^p + \sigma_2^p + \cdots + \sigma_n^p\right)^{1/p}.
\]
In particular, $\norm{G}_{S_1} = \norm{G}_{\ast}$ and $\norm{G}_{S_\infty} = \norm{G}$.

There are hundreds of articles devoted to the study of $\norm{G}_{\ast}$ and $\norm{G}$. Three decades of progress on the energy of graphs is summarized, for instance, by Li, Shi and Gutman in \cite{GutmanLiShi}. The lasting interest in $\norm{G}_{\ast}$ and $\norm{G}$ has recently expanded to include the study of more obscure norms of graphs. In particular, Nikiforov establishes several results for the Ky Fan and Schatten norms of graphs in \cite{Nikiforov2,Nikiforov1}.

While these classical norms are natural and important, they are usually studied individually, and the proofs of the corresponding inequalities are often specific to the considered functional. The main purpose of this paper is to show that many such inequalities can be derived from a common principle, and that this principle may lead to new sharp inequalities.

The language underlying this principle is that of majorization and Schur-convexity. If $\boldsymbol{x},\boldsymbol{y}\in\mathbb{R}^n$, we denote by $\boldsymbol{x}^{\downarrow}=(x_1^{\downarrow},x_2^{\downarrow},\dots,x_n^{\downarrow})$ the decreasing rearrangement of $\boldsymbol{x}$. We say that $\boldsymbol{x}$ is \emph{majorized} by $\boldsymbol{y}$, and write $\boldsymbol{x}\prec\boldsymbol{y}$, if
\[
\sum_{j=1}^k x_j^{\downarrow}\leq \sum_{j=1}^k y_j^{\downarrow}
\qquad (k=1,2,\dots,n),
\]
with equality for $k=n$. A function $f:\mathbb{R}^n\to\mathbb{R}$ is then called \emph{Schur-convex} if $f(\boldsymbol{x})\leq f(\boldsymbol{y})$ whenever $\boldsymbol{x}\prec\boldsymbol{y}$.

Our starting point is a collection of majorization relations comparing the spectrum of an arbitrary simple graph with the spectra of three canonical extremal families: the complete graphs, the complete bipartite graphs, and the matching graphs. Combined with Schur-convexity, these comparisons yield our main result, \Cref{thm - main}, which may be viewed as a general mechanism for producing sharp inequalities on graphs. More precisely, once a positive spectral functional is known to be Schur-convex in the eigenvalues, \Cref{thm - main} yields a chain of inequalities relating this functional to $\lambda_1$, $|\lambda_n|$, and $\norm{G}_{\ast}$, with equality attained by natural extremal graphs.

This shifts the emphasis of the problem. Rather than proving one inequality at a time, we reduce the task to the choice of a suitable Schur-convex function. The rest of the paper is organized around this question: which functions lead to the sharpest instances of \Cref{thm - main}? In this sense, the main objective of the paper is not merely to prove a collection of isolated inequalities, but to provide a systematic method for generating many of them from the same underlying principle.

\subsection{Random vector norms of graphs}\label{sec - random vector norms}

A particularly useful family of candidates is provided by the \emph{random vector norms}, which are norms on the space $\mathrm{H}_n$ of $n\times n$ Hermitian matrices introduced by Chávez, Garcia and Hurley in \cite{survey,Chavez1,Chavez2}. If $A \in H_n$ has eigenvalues $\lambda_1 \geq \lambda_2 \geq \cdots \geq \lambda_n$, $d \geq 1$, and $X_1,\dots,X_n$ are iid copies of a nonconstant random variable $X$ with $\mathbb{E}|X|^d<\infty$, then
\[
\|A\|_{X,d}
=
\Gamma(d+1)^{-1/d}
\left(
\mathbb{E}\left|\lambda_1X_1+\cdots+\lambda_nX_n\right|^d
\right)^{1/d}
\]
surprisingly defines a norm on $H_n$ \cite[Theorem 3 (a)]{Chavez2} (which is also submultiplicative under mild conditions; see \cite{Bouthat}). Since the adjacency matrix of a simple graph is Hermitian, this gives rise to a family of graph norms $\|G\|_{X,d}$ indexed by a distribution and an order parameter $d$.

These norms fit naturally into our framework for two reasons. First, they are Schur-convex in the eigenvalues, and therefore may be inserted directly into \Cref{thm - main}. That is, if $\boldsymbol{\lambda}(G) :=(\lambda_1, \lambda_2, \ldots, \lambda_n)$ denotes the vector of eigenvalues of a graph $G$, then the function $\boldsymbol{\lambda}(G)\mapsto \norm{G}_{X, d}$ is Schur-convex for every even $d\geq 2$ \cite{Chavez1}.

Second, when $d$ is even and the \emph{moment generating function} of $X$ exists, then Ch\'avez, Garcia and Hurley showed in \cite[Theorem 1 (b)]{Chavez1} that they admit explicit expansions that connect them to classical graph invariants. Recall that the moment generating function of a random variable $X$ is defined as the function $M_X(t):=\mathbb{E}[e^{tX}]=\sum_{k=0}^{\infty} \mathbb{E}[X^k] \frac{t^k}{k!}$. In particular, the authors proved that if $G$ is a graph of order $n$, then
\begin{equation}
	\norm{G}_{X,d}^d=[t^d] \prod_{k=1}^n M_X(\lambda_k t), \label{thm:Moments}
\end{equation}
where, here and throughout the paper, $[t^d]f(t)$ denotes the $d$th Taylor coefficient of $f(t)$. 

\smallskip

Moreover, they further proved that we may express the random vector norms as a combination of the number of closed walks of length $k$ in the graph for $k\leq d$. Henceforth, we let $c_k(G)$ denote the number of closed walks of length $k$ in $G$. Moreover, we write $c_k=c_k(G)$ when $G$ is understood. It is a fundamental fact that the number $c_k(G)$ is encoded in the spectrum of $G$ through the relation 
\begin{align*}
	c_k(G) = \lambda_1^k+\lambda_2^k+\cdots+\lambda_n^k = \operatorname{tr}\bigl(A^k(G)\bigr),
\end{align*}
where $A(G)$ is the adjacency matrix of the graph $G$.

To state this result, we first need to establish some notations and recall some definitions. A \emph{partition} of a positive integer $d$ is a tuple $\boldsymbol{\pi}=(\pi_1, \pi_2, \ldots, \pi_{\ell})$ of positive integers satisfying $\pi_1 \geq \pi_2 \geq \cdots \geq \pi_{\ell}$ and $\pi_1 + \pi_2 + \cdots + \pi_{\ell} = d$. We adopt the standard notation $\boldsymbol{\pi} \vdash d$ to indicate $\boldsymbol{\pi}$ is a partition of $d$. Given a partition $\boldsymbol{\pi}$, we define $y_{\boldsymbol{\pi}} := \prod_{i\geq 1} (i!)^{m_i} m_i!$, in which $m_i=m_i(\boldsymbol{\pi})$ denotes the multiplicity of $i$ in $\boldsymbol{\pi}$. We write $i^{m_i}$ when $i$ appears $m_i$ times in $\boldsymbol{\pi}$. For instance, $(1, 1,1,2,2)=(1^3,2^2)$. \Cref{ex:yValues} gives a few important values of $y_{\boldsymbol{\pi}}$. 

\begin{table}[ht]
	\centering
	\begin{tabular}{ |c|c|c|c|c|c|c|c|} 
		\hline
		&&&&&&&\\[-10.5pt]
		$\boldsymbol{\pi}$ & $(2)$ & $(4)$ & $(2^2)$ & $(6)$ & $(4,2)$ & $(3^2)$ & $(2^3)$\\
		\hline&&&&&&&\\[-10.5pt]
		$y_{\boldsymbol{\pi}}$ & $2!$ & $4!$ & $(2!)^3$ & $6!$ & $2!4!$ & $2! (3!)^2$ & $(2!)^33!$ \\
		\hline
	\end{tabular} 
	\vspace{6pt}
	\caption{Values of $y_{\boldsymbol{\pi}}$ for partitions of $2$, $4$ and $6$ without 1.}
	\label{ex:yValues}
\end{table}

Lastly, if $X$ admits the moment generating function $M_X(t)$, then the \emph{cumulant generating function of $X$} is defined by
\begin{align*}
	K_X(t):=\log M_X(t)=\sum_{j=1}^{\infty} \kappa_j \frac{t^j}{j!},
\end{align*} 
in which $\kappa_j$ is called the $j$th \emph{cumulant} of $X$. Given a partition $\boldsymbol{\pi}$ of $d$, we write $\kappa_{\boldsymbol{\pi}}:=\kappa_{\pi_1}\kappa_{\pi_2}\cdots \kappa_{\pi_{\ell}}$. Similarly, we also write $c_{\boldsymbol{\pi}}:=c_{\pi_1}c_{\pi_2}\cdots c_{\pi_{\ell}}$

Using these standard definitions, the authors of \cite{Chavez1} proved that if $d\geq 2$ is even, $X$ is a random variable with cumulant sequence $\kappa_1, \kappa_2, \dots$, and $G$ is a graph of order $n$ with $c_k$ closed walks of length $k$, then
\begin{equation}\label{thm:Cumulants}
	\norm{G}_{X,d}^d=\sum_{\boldsymbol{\pi}\vdash d} \frac{\kappa_{\boldsymbol{\pi}} c_{\boldsymbol{\pi}}}{ y_{\boldsymbol{\pi}}}.
\end{equation} 
We may, without loss of generality, consider the summation over all partitions without 1 appearing, since $c_1(\lambda_1,\ldots, \lambda_n) = \lambda_1+\cdots+\lambda_n = \operatorname{tr}\bigl(A(G)\bigr) = 0$ for simple graphs. 
Both the formulations \eqref{thm:Moments} and \eqref{thm:Cumulants} shall be useful to establish our results in this paper.

\subsection{Organization of the paper}

\Cref{sec:Bounds} develops the general framework. We first establish the majorization relations between the spectrum of an arbitrary graph and the spectra of the graphs $K_n$, $K_{r,s}$, and $M_n$, and then prove \Cref{thm - main}, which transforms any positive Schur-convex spectral functional into a family of sharp inequalities. We also illustrate the method with the Schatten norms, recovering natural inequalities and reformulating them in terms of closed walks.

\Cref{sec - d small} turns to the optimization problem inside the class of random vector norms. We determine the sharpest choices in small even degrees, obtain complete results for $d=2$ and $d=4$, and provide evidence for the role of the Rademacher and Schatten norms in higher degrees. This leads to sharp inequalities involving the numbers of closed walks and the extremal eigenvalues.

\Cref{sec - other} studies further distributions whose special algebraic structure yields additional applications of the framework. In particular, these choices lead to convenient or sharp bounds for spectral quantities and closed walks in settings such as triangle-free and square-free graphs.

%%%%%%%%%%%%%%%%%%%%%%%%%%%%%%%%%%%%%%%%%%%%%%%%%%%%%%%%%%%%%%%%%%%%%%%%%%%
%%%%%%%%%%%%%%%%%%%%%%%%%%%%%%%%%%%%%%%%%%%%%%%%%%%%%%%%%%%%%%%%%%%%%%%%%%%
%%%%%%%%%%%%%%%%%%%%%%%%%%%%%%%%%%%%%%%%%%%%%%%%%%%%%%%%%%%%%%%%%%%%%%%%%%%
%%%%%%%%%%%%%%%%%%%%%%%%%%%%%%%%%%%%%%%%%%%%%%%%%%%%%%%%%%%%%%%%%%%%%%%%%%%
%%%%%%%%%%%%%%%%%%%%%%%%%%%%%%%%%%%%%%%%%%%%%%%%%%%%%%%%%%%%%%%%%%%%%%%%%%%

\section{Bounds for the extreme eigenvalues and the energy}\label{sec:Bounds}
Our first goal is to establish some majorization-based tools which allow us to obtain sharp bounds for $\lambda_1$, $|\lambda_n|$ and $\norm{G}_{\ast}$ in terms of the graph spectrum. Henceforth, we let $K_n, K_{r,s}$, $P_n$ and $S_n$ denote the \emph{complete}, \emph{complete bipartite}, \emph{path} and \emph{star} graphs, respectively. Moreover, we denote by $M_n$ the \emph{matching} graph on $n$ vertices, which corresponds to a \emph{perfect matching} when $n$ is even, and to a \emph{near-perfect matching} if $n$ is odd (i.e., a perfect matching with one isolated vertex). 

%%%%%%%%%%%%%%%%%%%%%%%%%%%%%%%%%%%%%%%%%%%%%%%%%%%%%%%%%%%%%%%%%%%%%%%%%%%
%%%%%%%%%%%%%%%%%%%%%%%%%%%%%%%%%%%%%%%%%%%%%%%%%%%%%%%%%%%%%%%%%%%%%%%%%%%
%%%%%%%%%%%%%%%%%%%%%%%%%%%%%%%%%%%%%%%%%%%%%%%%%%%%%%%%%%%%%%%%%%%%%%%%%%%

\subsection{Bounds over all graphs}\label{sec:SimpleBounds}
We define $\mbox{spec}(G)$ to be the multiset of eigenvalues of a graph $G$. To establish our main result, we first begin by presenting the following spectra of classical graphs, which are well-known.

\begin{proposition}\cite[Section 1.4]{Brouwer}\label{prop:Spectra} The spectra for $P_n$, $K_n$, $K_{r,s}$ and $M_n$ are as follows:
	\begin{enumerate}
		\item[(a)] $\operatorname{spec}(P_n)= \bigl\{2\cos\bigl( \frac{k\pi}{n+1}\bigr)\,:\,k=1,2,\ldots, n\bigr\},$
		\item[(b)] $\operatorname{spec}(K_n)=\bigl\{n-1,\,-1,\,\dots,-1\bigr\}$,%\big\{n-1,\, (-1)^{[n-1]}\big\}$,
		\item[(c)] $\operatorname{spec}(K_{r,s})=\bigl\{\sqrt{rs},\,0,\, \dots,\, 0, -\sqrt{rs}\bigr\}$,%\big\{\sqrt{rs},\, 0^{[r+s-2]},\, -\sqrt{rs}\big\}$.
		
		\item[(d)] $\operatorname{spec}(M_n)\!=\!\bigl\{1,\dots,1,-1,\dots, -1\bigr\}$ if $n$ is even and $\bigl\{1,\dots,1,0,-1,\dots, -1\bigr\}$ if $n$ is odd.
	\end{enumerate}
\end{proposition}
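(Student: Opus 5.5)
Each of the four spectra can be computed directly from the adjacency matrix. Every computation reduces to a simple structure: a tridiagonal matrix for the path, a rank-one perturbation for the complete graph, a rank-two matrix for the complete bipartite graph, and a block-diagonal matrix for the matching.

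For (a), let $A=A(P_n)$ and fix $k\in\{1,\dots,n\}$. Set $\theta=\frac{k\pi}{n+1}$ and $v_j=\sin(j\theta)$ for $j=1,\dots,n$, with the convention $v_0=v_{n+1}=0$; the boundary value $v_{n+1}=\sin(k\pi)=0$ holds precisely because of the choice of $\theta$. Then $(Av)_j=v_{j-1}+v_{j+1}=2\cos\theta\,\sin(j\theta)$ by the sum-to-product identity. So $v$ is an eigenvector with eigenvalue $2\cos\theta$, and $v\neq 0$ since $v_1=\sin\theta\neq0$. The $n$ values $2\cos\frac{k\pi}{n+1}$ are distinct because cosine is injective on $(0,\pi)$, so they form the whole spectrum.

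For (b), write $A(K_n)=J-I$, where $J$ is the all-ones matrix. The matrix $J$ has rank one, with eigenvalue $n$ on $\mathbf 1$ and eigenvalue $0$ with multiplicity $n-1$ on $\mathbf 1^\perp$. Hence $A(K_n)$ has eigenvalue $n-1$ once and $-1$ with multiplicity $n-1$.

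For (c), $A(K_{r,s})$ has rank two, so $0$ has multiplicity at least $r+s-2$. On the span of $\mathbf 1_r\oplus 0$ and $0\oplus\mathbf 1_s$, the matrix acts as $\begin{pmatrix}0&s\\ r&0\end{pmatrix}$, whose eigenvalues are $\pm\sqrt{rs}$. Alternatively, the nonzero eigenvalues are forced by $\operatorname{tr}A=0$ together with $\operatorname{tr}A^2=2rs$.

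For (d), $A(M_n)$ is a direct sum of $\lfloor n/2\rfloor$ copies of $\begin{pmatrix}0&1\\1&0\end{pmatrix}$, each with eigenvalues $\pm1$. When $n$ is odd there is an additional $1\times1$ zero block, which contributes the eigenvalue $0$.

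None of these steps is a real obstacle. The only point needing care is in (a): one must check that the boundary condition $v_{n+1}=0$ holds and that the $n$ eigenvalues obtained are distinct, so that nothing in the spectrum is missed.
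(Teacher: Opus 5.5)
Your argument is correct in all four parts, including the two checks you flag for the path: the boundary condition $v_{n+1}=\sin(k\pi)=0$, and the $n$ distinct eigenvalues exhausting the spectrum. The paper gives no proof of its own and only cites \cite[Section 1.4]{Brouwer}. Your computations are the standard verifications behind that citation: the sine eigenvectors for $P_n$, the decomposition $A(K_n)=J-I$, the rank-two argument for $K_{r,s}$, and the block-diagonal structure of $A(M_n)$.
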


Let us now prove a series of three key lemmas.

\begin{lemma}\label{lemma:Complete}
	Let $G$ be a simple graph of order $n$ with eigenvalues $\bm{\lambda}=(\lambda_1,\lambda_{2},\dots,\lambda_n)$ and let $\bm{\mu} = (n-1,-1,\dots,-1)$. Then,
	\[
	\frac{\lambda_1}{n-1}\,\bm{\mu} \,\prec\, \bm{\lambda} \,\prec\, |\lambda_n| \bm{\mu}.
	\]
\end{lemma}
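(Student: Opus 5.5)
The plan is to check the defining partial-sum inequalities directly. Write $S_k(\bm{x})=\sum_{j=1}^k x_j^{\downarrow}$. All three vectors have total sum $0$:
\begin{itemize}
\item $\bm{\lambda}$, because $\sum_j\lambda_j=\operatorname{tr}A(G)=0$;
\item $\bm{\mu}$, because $(n-1)-(n-1)=0$;
\item every scalar multiple of $\bm{\mu}$, for the same reason.
\end{itemize}
So the equality condition at $k=n$ holds automatically, and only the inequalities for $1\le k\le n-1$ need checking. We may assume $n\ge 2$. We also use $\lambda_1\ge 0\ge\lambda_n$, which follows from the zero trace. Hence both scaled copies of $\bm{\mu}$ are already in decreasing order: their first entry is nonnegative and the rest are equal and nonpositive.

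For the right-hand majorization $\bm{\lambda}\prec|\lambda_n|\bm{\mu}$, the partial sums of the right vector are
\[
S_k(|\lambda_n|\bm{\mu})=|\lambda_n|(n-1)-(k-1)|\lambda_n|=(n-k)|\lambda_n|.
\]
Since the total sum is zero,
\[
S_k(\bm{\lambda})=-(\lambda_{k+1}+\cdots+\lambda_n).
\]
Each $\lambda_j\ge\lambda_n=-|\lambda_n|$, so $-(\lambda_{k+1}+\cdots+\lambda_n)\le (n-k)|\lambda_n|$. This is exactly the required inequality.

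For the left-hand majorization $\tfrac{\lambda_1}{n-1}\bm{\mu}\prec\bm{\lambda}$, set $\bm{a}=\tfrac{\lambda_1}{n-1}\bm{\mu}$. This vector keeps the first entry $\lambda_1$ and replaces $\lambda_2,\dots,\lambda_n$ by their average, which is $-\lambda_1/(n-1)$ because of the zero trace. Its partial sums are
\[
S_k(\bm{a})=\lambda_1-(k-1)\tfrac{\lambda_1}{n-1}.
\]
On the other side, $S_k(\bm{\lambda})=\lambda_1+(\lambda_2+\cdots+\lambda_k)$. The $k-1$ largest of $\lambda_2,\dots,\lambda_n$ have average at least the average of all $n-1$ of them, so
\[
\lambda_2+\cdots+\lambda_k\ge (k-1)\cdot\frac{-\lambda_1}{n-1}.
\]
Therefore $S_k(\bm{\lambda})\ge S_k(\bm{a})$. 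This is the standard fact that a vector majorizes its partial averaging, applied to the last $n-1$ coordinates with the first coordinate fixed.

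There is no real obstacle; the argument uses only $\operatorname{tr}A(G)=0$ and the eigenvalue ordering. The one point to check carefully is that the scaled copies of $\bm{\mu}$ are already decreasingly ordered, which is what the sign facts $\lambda_1\ge 0\ge\lambda_n$ guarantee. The degenerate case of the empty graph, where $\bm{\lambda}=0$, is covered since all three vectors are then zero.
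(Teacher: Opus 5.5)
Your proof is correct and follows essentially the same route as the paper: use $\operatorname{tr}A(G)=0$ to rewrite partial sums of $\bm{\lambda}$ as negated tail sums, then compare averages. You bound the tail termwise by $|\lambda_n|$ for the upper relation, and for the lower one you use that the average of the top $k-1$ of $\lambda_2,\dots,\lambda_n$ dominates their full average. The paper instead characterizes all admissible scalings $\alpha\bm{\mu}$ and $\beta\bm{\mu}$ (``if and only if''), which also shows that $|\lambda_n|$ and $\lambda_1/(n-1)$ are the best possible constants; you verify only these two constants, but you also check explicitly that the scaled copies of $\bm{\mu}$ are already decreasing and cover the empty graph.
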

\begin{proof}
	Let $\alpha,\beta>0$. We have $\bm{\lambda} \prec \alpha \bm{\mu}$ if and only if $\sum_{j=1}^k \lambda_j \leq \alpha (n-k)$ for all $k=1,2,\dots,n-1$. Since $\lambda_1+\cdots+\lambda_n =\operatorname{tr}(A)=0$, this is equivalent to 
	\[
	\frac{-\lambda_n-\cdots-\lambda_{k+1}}{n-k} \leq \alpha, \qquad (k=1,2,\dots,n-1).
	\]
	Similarly, we have $\beta \bm{\mu} \prec \bm{\lambda}$ if and only if
	\[
	\frac{-\lambda_n-\cdots-\lambda_{k+1}}{n-k} \geq \beta, \qquad (k=1,2,\dots,n-1).
	\]
	Now, the left-hand side is the average of the $n-k$ largest values of $-\bm{\lambda}$. Therefore, this quantity is nondecreasing, which implies that we have $\bm{\lambda} \prec \alpha \bm{\mu}$ if and only if $|\lambda_n| = -\lambda_n \leq \alpha$, and $\beta \bm{\mu} \prec \bm{\lambda}$ if and only if 
	\[
	\frac{\lambda_1}{n-1} = \frac{-\lambda_n-\cdots-\lambda_{2}}{n-1} \geq \beta.
	\]
	The assertions then follow directly.
\end{proof}

\begin{lemma}\label{lemma:CompleteBipartite}
	Let $G$ be a simple graph of order $n$ with eigenvalues $\bm{\lambda}=(\lambda_1,\lambda_{2},\dots,\lambda_n)$ and let $\bm{\nu} = (\sqrt{rs},0,\dots,0,-\sqrt{rs})$ for real numbers $r,s>0$. Then,
	\[
	\frac{|\lambda_n|}{\sqrt{rs}}\,\bm{\nu} \,\prec\, \bm{\lambda} \,\prec\, \frac{\|G\|_\ast}{2\sqrt{rs}} \,\bm{\nu}.
	\]
\end{lemma}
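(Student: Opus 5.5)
The plan is to follow the scheme of \Cref{lemma:Complete}: for a scalar $\gamma>0$, write down explicitly what the relation $\gamma\bm{\nu}\prec\bm{\lambda}$ (or its reverse) means, and then use $\operatorname{tr}(A)=0$ to reduce it to a single extremal condition. Since $\gamma\bm{\nu}$ is already decreasing, its partial sums are $\gamma\sqrt{rs}$ for $k=1,\dots,n-1$ and $0$ for $k=n$. Both vectors have total sum zero, so the equality condition at $k=n$ holds automatically. Set $t=\gamma\sqrt{rs}$. Then
\[
\bm{\lambda}\prec\gamma\bm{\nu}\iff \max_{1\le k\le n-1}\sum_{j=1}^k\lambda_j\le t,
\qquad
\gamma\bm{\nu}\prec\bm{\lambda}\iff \min_{1\le k\le n-1}\sum_{j=1}^k\lambda_j\ge t .
\]

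For the right inequality, let $S_k=\lambda_1+\cdots+\lambda_k$. Each new term $\lambda_{k+1}$ is nonpositive exactly when $k+1>p$, where $p$ is the number of positive eigenvalues. So $S_k$ increases while positive eigenvalues are added and decreases afterwards, and its maximum is the sum of the positive eigenvalues. Because the trace is zero, this sum equals $\tfrac12\sum_j|\lambda_j|=\tfrac12\|G\|_\ast$. Taking $t=\|G\|_\ast/2$, that is $\gamma=\|G\|_\ast/(2\sqrt{rs})$, gives $\bm{\lambda}\prec\gamma\bm{\nu}$. This needs a nonempty graph, so that some eigenvalue is positive and $p\le n-1$; for the empty graph both sides are $\bm 0$ and the statement is trivial.

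For the left inequality, the sequence $S_k$ has the same unimodal shape, so its minimum over $1\le k\le n-1$ is attained at an endpoint, namely $\min\{S_1,S_{n-1}\}$. Here $S_{n-1}=-\lambda_n=|\lambda_n|$, and $S_1=\lambda_1\ge|\lambda_n|$ by Perron--Frobenius, since the spectral radius of a nonnegative matrix is an eigenvalue. Hence the minimum equals $|\lambda_n|$. Taking $t=|\lambda_n|$, that is $\gamma=|\lambda_n|/\sqrt{rs}$, gives $\gamma\bm{\nu}\prec\bm{\lambda}$.

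The main point to get right is the unimodality argument that turns the family of $n-1$ inequalities into a single extremal one, together with the degenerate cases $n=1$ and the edgeless graph. I do not expect any real obstacle beyond writing out these partial-sum comparisons carefully.
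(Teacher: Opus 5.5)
Your proposal is correct and follows essentially the same route as the paper: both majorizations are reduced to bounds on the partial sums $S_k$ for $1\le k\le n-1$, unimodality of $S_k$ together with $\operatorname{tr}(A)=0$ identifies the maximum as $\|G\|_\ast/2$, and Perron--Frobenius identifies the minimum $\min\{\lambda_1,-\lambda_n\}$ as $|\lambda_n|$. Your explicit treatment of the edgeless graph is a small but welcome addition, since there the scaling factor is $0$, which the paper's assumption $\alpha,\beta>0$ does not cover.
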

\begin{proof}
	Let $\alpha,\beta>0$. We have $\bm{\lambda} \prec \alpha \bm{\nu}$ if and only if 
	\[
	\sum_{j=1}^k \lambda_k \leq \alpha \sqrt{rs} , \qquad (k=1,2,\dots,n-1)
	\]
	and $\beta \bm{\nu} \prec \bm{\lambda}$ if and only if 
	\[
	\sum_{j=1}^k \lambda_k \geq \beta \sqrt{rs}, \qquad (k=1,2,\dots,n-1).
	\]
	Suppose $1\leq \ell\leq n$ is such that $\lambda_1,\dots, \lambda_{\ell} \geq 0$ and $\lambda_{\ell+1},\dots,\lambda_n < 0$. The sum $\sum_{j=1}^k \lambda_k$ is then nondecreasing until $k=\ell$ and nonincreasing afterwards. Hence, $\sum_{j=1}^k \lambda_k$ is maximal at $k=\ell$. However, since $\lambda_1+\cdots + \lambda_n =0$, it follows that
	\begin{align*}
		\|G\|_* &= |\lambda_1| + \cdots + |\lambda_{\ell}| + |\lambda_{\ell+1}| + \cdots + |\lambda_n| \nonumber \\
		&= \lambda_1 + \cdots + \lambda_{\ell} - (\lambda_{\ell+1} + \cdots + \lambda_n) \\
		&= 2(\lambda_1 + \cdots + \lambda_{\ell}),
	\end{align*}
	which implies that $\bm{\lambda} \prec \frac{\|G\|_\ast}{2\sqrt{rs}} \bm{\nu}.$ 
	Moreover, the sum $\sum_{j=1}^k \lambda_k$ is minimal either at $k=1$ or $k=n-1$, where it is respectively equal to $\lambda_1$ and $-\lambda_n$. However, by the Perron--Frobenius theorem, we have $\lambda_1 \geq |\lambda_n|$, which implies that the minimum is realized at $|\lambda_n|$. Therefore, we have $\frac{|\lambda_n|}{\sqrt{rs}}\,\bm{\nu} \prec \bm{\lambda}$, as desired.
\end{proof}

\begin{lemma}\label{lemma:K2}
	Let $G$ be a simple graph of order $n$ with eigenvalues $\bm{\lambda}=(\lambda_1,\lambda_{2},\dots,\lambda_n)$, and let $\bm{w} = (1,\dots,1,-1,\dots,-1)$ if $n$ is even and $\bm{w} = (1,\dots,1,0,-1,\dots,-1)$ if $n$ is odd. Then
	\[
	\beta\bm{w} \,\prec\, \bm{\lambda} \,\prec\, \lambda_1 \bm{w},
	\]
	where
	\[
	\beta:= \begin{cases}
		\frac{2}{n} \sum_{j=1}^{n/2} \lambda_j \quad &\text{if } n\text{ is even;}\\
		\frac{2}{n-1} \sum_{j=1}^{\frac{n-1}{2}} \lambda_j + \frac{2}{n-1}\min\Bigl\{0,\, \lambda_{\frac{n+1}{2}}\Bigr\} \quad &\text{if } n\text{ is odd.}
	\end{cases}
	\]
	Moreover, if $G$ is bipartite, then $\beta = \frac{1}{2\lfloor n/2 \rfloor} \|G\|_\ast$ and
	\[
	\frac{\|G\|_\ast}{2\lfloor n/2 \rfloor}\, \bm{w} \,\prec\, \bm{\lambda} \,\prec\, \lambda_1 \bm{w}.
	\]
\end{lemma}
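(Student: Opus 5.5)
We follow the pattern of \Cref{lemma:Complete} and \Cref{lemma:CompleteBipartite}: first translate each majorization into a family of linear inequalities on the partial sums $S_k:=\lambda_1+\cdots+\lambda_k$, then identify the extremal $k$. Put $m=\lfloor n/2\rfloor$. The vector $\bm{w}$ is already decreasing. Its partial sums are $W_k=\min\{k,\,n-k\}$ when $n$ is even. When $n$ is odd they are $W_k=k$ for $k\le m$ and $W_k=n-k$ for $k\ge m+1$, so $W_m=W_{m+1}=m$. Both $\bm{\lambda}$ and $\alpha\bm{w}$ sum to zero. Hence for $\alpha>0$,
\[
\bm{\lambda}\prec\alpha\bm{w}\iff S_k\le \alpha W_k\quad(1\le k\le n-1),
\]
and $\beta\bm{w}\prec\bm{\lambda}$ holds if and only if $S_k\ge\beta W_k$ for all such $k$.

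\textbf{Upper bound $\bm{\lambda}\prec\lambda_1\bm{w}$.} For $k\le m$ we have $S_k\le k\lambda_1=\lambda_1W_k$, since every $\lambda_j\le\lambda_1$. For $k\ge n-m$, write $S_k=-(\lambda_{k+1}+\cdots+\lambda_n)$. This is a sum of $n-k$ terms, each bounded by $|\lambda_n|\le\lambda_1$ by Perron--Frobenius. So $S_k\le(n-k)\lambda_1=\lambda_1W_k$. These two ranges cover all $k$, including the middle index when $n$ is odd.

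\textbf{Lower bound.} Here we need $\beta\le\min_k S_k/W_k$, and $\beta$ should be the largest such value. For $k\le m$, $S_k/k$ is an average of the $k$ largest eigenvalues, so it is nonincreasing in $k$. Its minimum over this range is $S_m/m$. Symmetrically, for $k\ge n-m$, $S_k/(n-k)$ is the average of the $n-k$ largest entries of $-\bm{\lambda}$, since $-S_k=\lambda_{k+1}+\cdots+\lambda_n$. It is therefore nonincreasing as $n-k$ grows, and its minimum is at $k=n-m$.
\begin{itemize}
\item If $n$ is even, both minima are at $k=m$. This gives $\beta=S_m/m=\frac{2}{n}\sum_{j\le n/2}\lambda_j$.
\item If $n$ is odd, the minimum is $\min\{S_m,S_{m+1}\}/m$. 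Since $S_{m+1}=S_m+\lambda_{m+1}$, this equals $\frac{1}{m}\bigl(S_m+\min\{0,\lambda_{m+1}\}\bigr)$, which is the stated $\beta$.
\end{itemize}
We must also check that $\beta>0$, or at least $\beta\ge0$, so that the scaling argument is valid. In the even case $S_m\ge0$ because $S_m\ge\frac{m}{n}S_n=0$, as the average of the top half is at least the overall average. The odd case follows similarly from $S_{m+1}\ge0$ and $S_m\ge0$.

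\textbf{Bipartite case.} The spectrum of a bipartite graph is symmetric about $0$. It follows that $\lambda_{n+1-j}=-\lambda_j$, that the number of positive eigenvalues is at most $m$, and that $\lambda_{m+1}=0$ when $n$ is odd. Therefore $S_m$ is the sum of all the positive eigenvalues, which is $\frac12\|G\|_\ast$ exactly as computed in \Cref{lemma:CompleteBipartite}. The min term vanishes, so $\beta=\frac{\|G\|_\ast}{2m}$.

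\textbf{Main obstacle.} The delicate step is the odd case. One must check carefully that the middle index contributes exactly the $\min\{0,\lambda_{(n+1)/2}\}$ correction. The monotonicity arguments on the two halves also need to meet consistently at $k=m$ and $k=m+1$, where $W_m=W_{m+1}=m$.
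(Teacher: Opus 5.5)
Your proof is correct and follows essentially the same route as the paper: you reduce both majorizations to partial-sum inequalities against $\bm{w}$, then use monotonicity of top-$k$ averages of $\bm{\lambda}$ and of $-\bm{\lambda}$ together with Perron--Frobenius, and handle the odd-$n$ middle index and the symmetric bipartite spectrum the same way. Your explicit check that $\beta\ge 0$ (so that $\beta\bm{w}$ is already in decreasing order and its partial sums really are $\beta W_k$) is a small point the paper leaves implicit.
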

\begin{proof}
	Write $ s_k:=\sum_{j=1}^k \lambda_j$, where $\lambda_1\ge \cdots \ge \lambda_n$ and $\sum_{j=1}^n \lambda_j=0$. We first prove that $\bm{\lambda}\prec \lambda_1\bm{w}.$ If $n$ is even, then $\bm{\lambda}\prec \alpha \bm{w}$ is equivalent to
	\[
	s_k\le \alpha k \quad (1\le k\le n/2),
	\qquad
	s_k\le \alpha (n-k)\quad (n/2<k<n).
	\]
	Now, $s_k/k$ is the average of the $k$ largest eigenvalues, hence decreases with $k$. Thus, the first family holds if and only if
	\[
	\lambda_1\le \alpha.
	\]
	Also, since $s_k=-\sum_{j=k+1}^n \lambda_j$, the second family is equivalent to
	\[
	\frac{-\lambda_n-\cdots-\lambda_{k+1}}{n-k}\le \alpha,
	\]
	and the left-hand side is the average of the $n-k$ largest entries of $-\bm{\lambda}$. Hence, it is maximized when $n-k=1$. Therefore, the second family holds if and only if
	\[
	-\lambda_n\le \alpha.
	\]
	By Perron--Frobenius, $-\lambda_n\le \lambda_1$. Hence, taking $\alpha=\lambda_1$ gives $\bm{\lambda}\prec \lambda_1\bm{w}.$ 
	If $n$ is odd, the same argument handles the inequalities for
	\[
	1\le k\le \frac{n-1}{2}
	\qquad\text{and}\qquad
	\frac{n+3}{2}\le k<n,
	\]
	and again these reduce to $\lambda_1\le \alpha$. The only extra condition is
	\[
	s_{\frac{n+1}{2}} \le \alpha\, \frac{n-1}{2}.
	\]
	With $\alpha=\lambda_1$, we have
	\[
	s_{\frac{n+1}{2}}
	=
	-\!\sum_{j=(n+3)/2}^n \lambda_j
	\le
	\frac{n-1}{2}\lambda_1,
	\]
	so this middle inequality also holds. Therefore, $\bm{\lambda}\prec \lambda_1\bm{w}$ for all $n$.
	We now prove that $\beta \bm{w}\prec \bm{\lambda}.$ Suppose first that $n$ is even. Then $\beta\bm{w}\prec \bm{\lambda}$ is equivalent to
	\[
	s_k\ge \beta k \quad (1\le k\le n/2),
	\qquad
	s_k\ge \beta(n-k)\quad (n/2<k<n).
	\]
	Since $s_k/k$ decreases with $k$, the first family holds if and only if $\beta\le \frac{2}{n}\sum_{j=1}^{n/2}\lambda_j.$ Using $\sum_{j=1}^n\lambda_j=0$, the second family is equivalent to the same condition. Hence, the largest possible $\beta$ is
	\[
	\beta=\frac{2}{n}\sum_{j=1}^{n/2}\lambda_j.
	\]
	Now suppose that $n$ is odd. Then $\beta\bm{w}\prec \bm{\lambda}$ is equivalent to
	\[
	s_k\ge \beta k \quad \bigl(1\le k\le \tfrac{n-1}{2}\bigr),\qquad s_k\ge \beta(n-k)\quad \bigl(\tfrac{n+3}{2}\le k<n\bigr)
	\]
	and
	\[
	s_{ \frac{n+1}{2}}\ge \beta \frac{n-1}{2}.
	\]
	As above, the first two families are both equivalent to
	\[
	\beta\le \frac{2}{n-1}\sum_{j=1}^{(n-1)/2}\lambda_j.
	\]
	Moreover, the middle inequality is equivalent to
	\[
	\beta\le \frac{2}{n-1}\sum_{j=1}^{(n+1)/2}\lambda_j
	=
	\frac{2}{n-1}\sum_{j=1}^{(n-1)/2}\lambda_j
	+\frac{2}{n-1}\lambda_{(n+1)/2}.
	\]
	Therefore, the optimal value for $\beta$ is
	\[
	\beta
	=
	\frac{2}{n-1}\sum_{j=1}^{(n-1)/2}\lambda_j
	+
	\frac{2}{n-1}\min\{0,\lambda_{(n+1)/2}\}.
	\]
	Finally, assume that $G$ is bipartite. Then $\lambda_j=-\lambda_{n+1-j}$ for all $1\le j\le n$. If $n$ is even, this gives
	\[
	\frac{\|G\|_*}{n}
	=
	\frac{1}{n}\sum_{j=1}^n |\lambda_j|
	=
	\frac{2}{n}\sum_{j=1}^{n/2}\lambda_j
	=
	\beta.
	\]
	If $n$ is odd, then necessarily $\lambda_{(n+1)/2}=0$, and similarly
	\[
	\frac{\|G\|_*}{n-1}
	=
	\frac{2}{n-1}\sum_{j=1}^{(n-1)/2}\lambda_j = \beta.
	\]
	Hence, in both cases
	\begin{equation*}
		\frac{\|G\|_*}{2\lfloor n/2 \rfloor}\,\bm{w}\prec \bm{\lambda}\prec \lambda_1\bm{w},
	\end{equation*}
	as desired.
\end{proof}

Using the preceding lemmas, we now prove our main theorem, which provides a general mechanism for generating sharp inequalities on simple graphs from positive Schur-convex functions and the majorization relations established above.

\begin{theorem}\label{thm - main}
	Let $G$ be a simple graph of order $n$ with eigenvalues $\bm{\lambda}=(\lambda_1,\lambda_2,\dots,\lambda_{n})$, and let $\varphi$ be a positive homogeneous function on graphs which is Schur-convex relative to its eigenvalues. Then, for all integers $r,s>0$ such that $r+s=n$, 
	\begin{equation}\label{eq - main}
		\frac{\beta}{n-1} \leq \frac{\varphi(G)}{(n-1)\varphi(M_{n})} \leq \frac{\lambda_1}{n-1} \leq \frac{\varphi(G)}{\varphi(K_{n})} \leq |\lambda_{n}| \leq \frac{\sqrt{rs}\varphi(G)}{\varphi(K_{r,s})} \leq \frac{\|G\|_*}{2},
	\end{equation}
	where
	\[
	\beta:= \begin{cases}
		\frac{2}{n} \sum_{j=1}^{n/2} \lambda_j \quad &\text{if } n\text{ is even;}\\
		\frac{2}{n-1} \sum_{j=1}^{(n-1)/2} \lambda_j + \frac{2}{n-1}\min\Bigl\{0,\, \lambda_{\frac{n+1}{2}}\Bigr\} \quad &\text{if } n\text{ is odd.}
	\end{cases}
	\]
	In particular, if $G$ is bipartite, then the inequality chain reduces to 
	\[
	\frac{\|G\|_*}{2\lfloor n/2\rfloor} \leq \frac{\varphi(G)}{\varphi(M_{n})} \leq \lambda_1 \leq \frac{\sqrt{rs}\varphi(G)}{\varphi(K_{r,s})} \leq \frac{\|G\|_*}{2}.
	\]
	Moreover, in \eqref{eq - main}, the first and second inequalities are saturated by $M_{n}$, the third and fourth are saturated by $K_{n}$ and the last two are saturated by $K_{r,s}$.
\end{theorem}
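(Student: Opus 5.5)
The plan is to combine the three majorization lemmas with Schur-convexity and homogeneity of $\varphi$. We regard $\varphi(G)=\Phi(\bm{\lambda})$ for a positive, Schur-convex, positively homogeneous $\Phi$ on $\mathbb{R}^n$. Then each relation $c\,\bm{\mu}\prec\bm{\lambda}$ with $c\ge 0$ gives $c\,\Phi(\bm{\mu})\le\Phi(\bm{\lambda})$, and dually for $\bm{\lambda}\prec c\,\bm{\mu}$. The spectra of $K_n$, $K_{r,s}$ and $M_n$ are $\bm{\mu}$, $\bm{\nu}$ and $\bm{w}$ by \Cref{prop:Spectra}, so $\Phi(\bm{\mu})=\varphi(K_n)$, $\Phi(\bm{\nu})=\varphi(K_{r,s})$ and $\Phi(\bm{w})=\varphi(M_n)$. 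These quantities are positive, so we may divide by them.

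The inequalities then follow in order.
\begin{itemize}
\item From \Cref{lemma:K2}, $\beta\varphi(M_n)\le\varphi(G)\le\lambda_1\varphi(M_n)$. Dividing by $(n-1)\varphi(M_n)$ gives the first two inequalities.
\item From \Cref{lemma:Complete}, $\frac{\lambda_1}{n-1}\varphi(K_n)\le\varphi(G)\le|\lambda_n|\varphi(K_n)$, which gives the third and fourth.
\item From \Cref{lemma:CompleteBipartite}, $\frac{|\lambda_n|}{\sqrt{rs}}\varphi(K_{r,s})\le\varphi(G)\le\frac{\|G\|_*}{2\sqrt{rs}}\varphi(K_{r,s})$, which gives the last two after multiplying by $\sqrt{rs}/\varphi(K_{r,s})$.
\end{itemize}
The chain links up because the middle terms $\lambda_1/(n-1)$ and $|\lambda_n|$ are shared between consecutive blocks. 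One subtlety is homogeneity for a possibly negative scalar. In \Cref{lemma:K2} the value $\beta$ could a priori be negative for odd $n$, but $\beta\le\lambda_1$, and $\beta\ge0$ follows from $s_k\ge0$ for $k\le(n+1)/2$. So only nonnegative scalars occur, and degenerate zero cases are handled by positivity of $\varphi$ on the reference graphs.

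For the bipartite case, \Cref{lemma:K2} gives $\beta=\|G\|_*/(2\lfloor n/2\rfloor)$, and $|\lambda_n|=\lambda_1$ by symmetry of the spectrum. The $K_n$ block then collapses, and the $M_n$ block, undivided by $n-1$, gives $\frac{\|G\|_*}{2\lfloor n/2\rfloor}\le\frac{\varphi(G)}{\varphi(M_n)}\le\lambda_1$. The $K_{r,s}$ block gives the rest with $|\lambda_n|=\lambda_1$.

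For saturation we substitute the spectra directly.
\begin{itemize}
\item For $G=M_n$ we have $\bm{\lambda}=\bm{w}$, $\lambda_1=1$ and $\beta=1$. For odd $n$, $\beta=\frac{2}{n-1}\cdot\frac{n-1}{2}+0=1$. So the first two inequalities become equalities.
\item For $G=K_n$ we have $\lambda_1=n-1$ and $|\lambda_n|=1$, and both sides of the third and fourth inequalities equal $1$.
\item For $G=K_{r,s}$ we have $|\lambda_n|=\sqrt{rs}$ and $\|G\|_*/2=\sqrt{rs}$, so the last two are equalities.
\end{itemize}

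The only step needing care is confirming that the scalars in the majorization lemmas are nonnegative, so that homogeneity applies in the form $\Phi(c\bm{x})=c\,\Phi(\bm{x})$. Checking $\beta\ge0$ in the odd case is the main obstacle. I would handle it via the observation that partial sums $s_k$ of a zero-sum decreasing sequence are nonnegative for every $k$.
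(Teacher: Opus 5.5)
Your proposal is correct and follows the paper's proof: apply Schur-convexity and homogeneity to the three majorization lemmas and rearrange. Your checks that all scalars are nonnegative (in particular $\beta=\frac{2}{n-1}\min\{s_{(n-1)/2},s_{(n+1)/2}\}\ge 0$ for odd $n$) and your explicit equality computations fill in details the paper leaves implicit. The only difference is in the bipartite case. There the paper also uses $\bm{\lambda}(M_n)\prec\bm{\lambda}(K_n)\prec(n-1)\bm{\lambda}(K_{1,1})$ to argue that the omitted $K_n$ block is redundant, whereas you read off the reduced chain directly from the $M_n$ and $K_{r,s}$ blocks. That is all the statement requires, so ``the $K_n$ block collapses'' would be better phrased as ``the $K_n$ block is dropped''.
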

\begin{proof}
	The result follows by using Schur-convexity with \Cref{lemma:Complete,lemma:CompleteBipartite,lemma:K2}, and then rearranging.
	Now, noting that $|\lambda_n|=\lambda_1$ and $\beta=\frac{\|G\|_*}{2\lfloor n/2 \rfloor}$ for bipartite graphs, it follows that
	\[
	\frac{\|G\|_*}{2(n-1)\lfloor n/2\rfloor} \leq \frac{\varphi(G)}{(n-1)\varphi(M_{n})} \leq \frac{\lambda_1}{n-1} \leq \frac{\varphi(G)}{\varphi(K_{n})} \leq \lambda_{1} \leq \frac{\sqrt{rs}\varphi(G)}{\varphi(K_{r,s})} \leq \frac{\|G\|_*}{2}.
	\]
	However, it is straightforward to show that $\bm{\lambda}(M_n) \prec \bm{\lambda}(K_n)$ and $\bm{\lambda}(K_n) \prec (n-1)\bm{\lambda}(K_{1,1})$, which implies that $\varphi(M_n) \leq \varphi(K_n)$ and $\varphi(K_n) \leq (n-1)\varphi(K_{1,1})$ for all positive Schur-convex function $\varphi$. Consequently, the second inequality is a sharper lower bound for $\lambda_1$ than the fourth, and the fifth inequality is a sharper upper bound than the third. Therefore, we may only consider these inequalities without any loss of generality.
\end{proof}

Naturally, one may seek Schur-convex functions for which each inequality is as tight as possible. The rest of the paper is devoted to this objective. We begin, however, with an example that proves important.

\subsection{Schatten $p$-norms}\label{sec - Schatten}

We first illustrate \Cref{thm - main} with the Schatten $p$-norms. Recall that, for a graph $G$ with eigenvalues $\lambda_1 \geq \lambda_2 \geq \cdots \geq \lambda_n$, the Schatten $p$-norm is given by
\[
\|G\|_p
=
\left(|\lambda_1|^p+|\lambda_2|^p+\cdots+|\lambda_n|^p\right)^{1/p}.
\]
Since this function is convex and symmetric for every $p\geq 1$, it is Schur-convex. Hence, applying \Cref{thm - main} to the Schatten $p$-norm gives, for every simple graph $G$ of order $n$,
\begin{equation}\label{ineq - Schatten}
	\frac{\beta^p}{(n-1)^p} \leq \frac{\norm{G}_p^p}{2(n-1)^p\lfloor \frac{n}{2}\rfloor} \leq \frac{\lambda_1^p}{(n-1)^p} \leq \frac{\norm{G}_p^p}{ (n-1)^p+n-1} \leq |\lambda_{n}|^p \leq \frac{\|G\|_p^p}{2} \leq \frac{\norm{G}_*^p}{2^p}.
\end{equation}
Here, we used the identities
\[
\|M_n\|_p^p=2\lfloor n/2\rfloor,
\qquad
\|K_{r,s}\|_p^p=2(rs)^{p/2}
\quad\text{ and }\quad
\|K_n\|_p^p=(n-1)^p+n-1.
\]

Most inequalities in \eqref{ineq - Schatten} are classical or follow from standard results such as Hölder's inequality and the monotonicity of power means. The proof given here, however, relies on a general framework that may be viewed as more fundamental, since a single argument yields all the bounds. Moreover, the Schatten $p$-norms are particularly convenient because they encode important geometric properties of the graph. Indeed, for every even integer $d\geq 2$,
\[
\|G\|_d^d
=
\lambda_1^d+\lambda_2^d+\cdots+\lambda_n^d
=
c_d(G),
\]
where $c_d(G)$ denotes the number of closed walks of length $d$ in $G$. Therefore, \eqref{ineq - Schatten} immediately yields the following formulation in terms of closed walks.

\begin{corollary}\label{cor - Schatten}
	Let $G$ be a simple graph of order $n$ with eigenvalues $\bm{\lambda}=(\lambda_1,\dots,\lambda_{n})$. Then, for all even $d\geq 2$,
	\begin{equation}\label{eq - Schatten}
		\frac{\beta^d}{(n-1)^d} \leq \frac{c_d(G)}{2(n-1)^d\lfloor \frac{n}{2}\rfloor} \leq \frac{\lambda_1^d}{(n-1)^d} \leq \frac{c_d(G)}{(n-1)^d+n-1} \leq |\lambda_{n}|^d \leq \frac{c_d(G)}{2} \leq \frac{\norm{G}_*^d}{2^d},
	\end{equation}
	where
	\[
	\beta := \begin{cases}
		\frac{2}{n} \sum_{j=1}^{n/2} \lambda_j \quad &\text{if } n\text{ is even;}\\
		\frac{2}{n-1} \sum_{j=1}^{(n-1)/2} \lambda_j + \frac{2}{n-1}\min\Bigl\{0,\, \lambda_{\frac{n+1}{2}}\Bigr\} \quad &\text{if } n\text{ is odd.}
	\end{cases}
	\]
	In particular, if $G$ is bipartite, then the inequality chain reduces to 
	\[
	\frac{\|G\|_*^d}{2^d\lfloor n/2 \rfloor^d} \leq \frac{c_d(G)}{2\lfloor n/2\rfloor} \leq \lambda_1^d \leq \frac{c_d(G)}{2} \leq \frac{\norm{G}_*^d}{2^d}.
	\]
\end{corollary}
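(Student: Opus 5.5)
The plan is to specialize \Cref{thm - main} to $\varphi(G)=\|G\|_d^d$ with $d$ even, or equivalently to raise the Schatten version \eqref{ineq - Schatten} (with $p=d$) to the $d$th power. Since $d$ is even, $|\lambda_j|^d=\lambda_j^d$, so $\|G\|_d^d=\sum_j\lambda_j^d=\operatorname{tr}(A^d)=c_d(G)$. Substituting this into \eqref{ineq - Schatten} gives the chain \eqref{eq - Schatten} term by term.

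There is one subtlety. \Cref{thm - main} is stated for a positive homogeneous Schur-convex $\varphi$, and $\|\cdot\|_d^d$ is homogeneous of degree $d$ rather than $1$. I will therefore work with $\varphi=\|\cdot\|_d$ itself, which is a norm that is convex and symmetric in the eigenvalues, hence Schur-convex and $1$-homogeneous. This recovers \eqref{ineq - Schatten}, where all the quantities $\beta$, $\lambda_1$, $|\lambda_n|$ and $\|G\|_*$ are nonnegative. For $\beta\ge 0$: for $n$ even it is an average of the top half of the eigenvalues, which is nonnegative because the eigenvalues sum to zero. For $n$ odd, $\beta$ is at least the corresponding average of $s_{(n+1)/2}$ or $s_{(n-1)/2}$, both of which are nonnegative. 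Raising a chain of nonnegative reals to the power $d$ preserves the order. I will also need the normalizations $\|M_n\|_d^d=2\lfloor n/2\rfloor$, $\|K_n\|_d^d=(n-1)^d+n-1$ and $\|K_{r,s}\|_d^d=2(rs)^{d/2}$, which follow from \Cref{prop:Spectra}. With these, the $K_{r,s}$ term becomes $(rs)^{d/2}c_d/(2(rs)^{d/2})=c_d/2$, which is independent of $r,s$.

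For the bipartite case I will use the reduced chain in \Cref{thm - main}, or argue directly. The spectrum is symmetric, so $|\lambda_n|=\lambda_1$ and, by \Cref{lemma:K2}, $\beta=\|G\|_*/(2\lfloor n/2\rfloor)$. Taking $d$th powers of $\|G\|_*/(2\lfloor n/2\rfloor)\le \|G\|_d/(2\lfloor n/2\rfloor)^{1/d}\le\lambda_1\le\|G\|_d/2^{1/d}\le\|G\|_*/2$ then gives exactly the stated reduced chain.

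The only step needing care is the positivity and homogeneity bookkeeping described above. The rest is substitution.
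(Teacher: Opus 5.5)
Your proposal is correct and follows the paper's own route. You apply \Cref{thm - main} to the $1$-homogeneous Schur-convex functional $\|\cdot\|_d$ to obtain \eqref{ineq - Schatten} with $p=d$, raise the nonnegative chain to the $d$th power, and substitute $\|G\|_d^d=c_d(G)$ along with the norms of $M_n$, $K_n$ and $K_{r,s}$. Your explicit checks that $\beta\ge 0$, and that one must use $\|\cdot\|_d$ rather than $\|\cdot\|_d^d$ before exponentiating, just make precise what the paper leaves implicit.
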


\begin{remark}
	Although the case $d=2$ recovers known estimates (see Caporossi, Cvetkovi\'c, Gutman and Hansen~\cite{Caporossi}, and Bhattacharya, Friedland and Peled~\cite{Bhattacharya}), the higher even-degree formulation appears not to have been recorded in this generality. In particular, the sharp lower bound on $|\lambda_n|^d$ in terms of $c_d(G)$, with equality at $K_n$, seems to be new.
\end{remark}

\subsection{Random vector norms}

Recall that our main objective is to make the inequalities in \Cref{thm - main} as sharp as possible. This problem is difficult since the existing characterizations of Schur-convex functions appear to be unfit for this analysis. However, as stated in \Cref{sec - random vector norms}, the random vector norms constitute a large class of positive functions which are Schur-convex in the eigenvalues of a graph. They can thus serve as a simplified family on which we may do our analysis. 

In fact, the Schatten $p$-norms considered above are a special limit case of these norms when $p=d$ is even. Indeed, consider the random variables $B_n\sim\mathrm{Bernoulli}(p_n)$, where 
\[
p_n:=\frac{1}{n^d},
\]
and define $X_n:=nB_n$. It is a standard fact that $\kappa_m(aY)=a^m\kappa_m(Y)$. Hence,
\[
\kappa_m(X_n)=\kappa_m(nB_n) = n^m \kappa_m(B_n).
\]
Moreover, for a Bernoulli$(p)$ random variable $B$, we have
\[
\kappa_m(B) = \sum_{j=1}^m (-1)^{j+1} (j-1)! S(m,j)p^j,
\]
where the $S(m,j)$ are the Stirling numbers of the second kind. Hence, as $p\to 0$, one has $\kappa_m(B)=p+O(p^2)$ for all $m\geq 1$, which implies $\kappa_m(B_n)=n^{-d}+O\bigl(n^{-2d}\bigr).$ Therefore,
\[
\kappa_m(X_n)
=n^m\!\left(n^{-d}+O\bigl(n^{-2d}\bigr)\!\right)
= n^{m-d}+O\!\left(n^{m-2d}\right).
\]
In particular,
\[
\kappa_m(X_n)\xrightarrow{n\to \infty} 0 \quad (1\le m\le d-1),
\qquad
\kappa_d(X_n) \xrightarrow{n\to \infty} 1\neq 0.
\]

Now, let $\boldsymbol{\pi}=(\pi_1,\dots,\pi_r)$ be a partition of $d$ with $\boldsymbol{\pi}\neq (d)$. Since $\boldsymbol{\pi}\neq (d)$, at least one part $\pi_i$ is strictly smaller than $d$. Hence, at least one factor in the product $\prod_{i=1}^r \kappa_{\pi_i}(X_n)$ converges to $0$.
Consequently,
\[
\kappa_{\pi_1}(X_n)\cdots \kappa_{\pi_r}(X_n) \xrightarrow{n\to \infty} 0
\]
for every partition $\boldsymbol{\pi}\neq (d)$. Therefore, considering such a sequence of random variables, we have
\begin{align*}
	\|G\|_{X_m,d}^d &= \sum_{\boldsymbol{\pi}\vdash d} \frac{\kappa_{\boldsymbol{\pi}}(X_m) }{ y_{\boldsymbol{\pi}}} \,c_{\boldsymbol{\pi}} \xrightarrow{m\to \infty} \frac{c_d}{y_{(d)}} = \frac{\|G\|_d^d}{d!}.
\end{align*}

As with the Schatten norms, which exploits \Cref{thm - main} to connect the spectral data of a graph to the numbers of closed walks $c_d$, the identity
\[
\|G\|_{X,d}^d = \sum_{\boldsymbol{\pi}\vdash d} \frac{\kappa_{\boldsymbol{\pi}} c_{\boldsymbol{\pi}}}{ y_{\boldsymbol{\pi}}}
\]
in \eqref{thm:Cumulants} yields further bounds of this type. Although the result is immediate, we state it separately for clarity, at the cost of some mild repetition.

\begin{proposition}\label{prop - random}
	Let $G$ be a simple graph of order $n$ with eigenvalues $\bm{\lambda}=(\lambda_1,\lambda_2,\dots,\lambda_{n})$, and let $X$ be a nonconstant random variable with $\mathbb{E}|X|^d<\infty$. Then, for all even $d\geq 2$,
	\begin{equation}\label{eq - random}
		\frac{\beta}{n-1} \leq \frac{\|G\|_{X,d}}{(n-1)\|M_n\|_{X,d}} \leq \frac{\lambda_1}{n-1} \leq \frac{\|G\|_{X,d}}{\|K_n\|_{X,d}} \leq |\lambda_{n}| \leq \frac{\sqrt{rs}\|G\|_{X,d}}{\|K_{r,s}\|_{X,d}} \leq \frac{\|G\|_*}{2},
	\end{equation}
	where
	\[
	\beta:= \begin{cases}
		\frac{2}{n} \sum_{j=1}^{n/2} \lambda_j \quad &\text{if } n\text{ is even;}\\
		\frac{2}{n-1} \sum_{j=1}^{(n-1)/2} \lambda_j + \frac{2}{n-1}\min\Bigl\{0,\, \lambda_{\frac{n+1}{2}}\Bigr\} \quad &\text{if } n\text{ is odd.}
	\end{cases}
	\]
	In particular, if $G$ is bipartite, then the inequality chain reduces to 
	\[
	\frac{\|G\|_*}{2\lfloor \frac{n}{2}\rfloor} \leq \frac{\|G\|_{X,d}}{\|M_n\|_{X,d}} \leq \lambda_{1} \leq \frac{\sqrt{rs}\|G\|_{X,d}}{\|K_{r,s}\|_{X,d}} \leq \frac{\|G\|_*}{2}.
	\]
	Moreover, in \eqref{eq - random}, the first and second inequalities are saturated by $M_{n}$, the third and fourth are saturated by $K_{n}$ and the last two are saturated by $K_{r,s}$.
\end{proposition}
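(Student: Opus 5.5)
This is a direct specialization of \Cref{thm - main} with $\varphi(G):=\|G\|_{X,d}$. The plan is to check that $\varphi$ meets the theorem's hypotheses and then read off the inequalities.

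\textbf{Step 1: hypotheses of \Cref{thm - main}.} Four properties of $\varphi$ are needed.
\begin{itemize}
\item \emph{Well-defined on graphs.} $\|G\|_{X,d}$ depends only on the multiset of eigenvalues of $A(G)$, since the $X_i$ are iid and the law of $\sum\lambda_iX_i$ is invariant under permuting the $\lambda_i$. So it is a spectral functional, independent of the vertex labeling.
\item \emph{Positive homogeneous.} This follows from $\mathbb{E}|c\sum\lambda_iX_i|^d=|c|^d\,\mathbb{E}|\sum\lambda_iX_i|^d$, which gives $\varphi(c\bm\lambda)=|c|\,\varphi(\bm\lambda)$.
\item \emph{Positive.} $\|\cdot\|_{X,d}$ is a norm on $\mathrm{H}_n$ by \cite[Theorem 3 (a)]{Chavez2}. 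Hence $\varphi(G)>0$ whenever $A(G)\neq 0$, which covers $M_n$, $K_n$ and $K_{r,s}$ for $n\ge 2$.
\item \emph{Schur-convex in the eigenvalues.} For even $d\geq 2$ this is the result of \cite{Chavez1} recalled in \Cref{sec - random vector norms}.
\end{itemize}

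\textbf{Step 2: reading off the chain.} With these in hand, the chain \eqref{eq - random} is \eqref{eq - main} with $\varphi=\|\cdot\|_{X,d}$ substituted. Concretely, one applies $\varphi$ to the majorizations of \Cref{lemma:Complete,lemma:CompleteBipartite,lemma:K2}, uses homogeneity to pull out the scalars $\lambda_1/(n-1)$, $|\lambda_n|$, $|\lambda_n|/\sqrt{rs}$, $\|G\|_*/(2\sqrt{rs})$, $\beta$ and $\lambda_1$, and then divides by $\varphi(K_n)$, $\varphi(K_{r,s})$ or $\varphi(M_n)$. The bipartite reduction follows exactly as in the theorem: $|\lambda_n|=\lambda_1$, and $\beta=\|G\|_*/(2\lfloor n/2\rfloor)$ by \Cref{lemma:K2}. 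The equality cases are inherited as well, since substituting $G=M_n$, $K_n$ or $K_{r,s}$ makes the corresponding majorizations equalities. For example, for $K_{r,s}$ we have $|\lambda_n|=\sqrt{rs}=\|G\|_*/2$.

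\textbf{Main obstacle.} The only point needing care is the Schur-convexity hypothesis. The cited result of \cite{Chavez1} requires $d$ even, which is why the proposition is stated only for even $d$. Should the reader want a self-contained argument, I would use the following.
\begin{itemize}
\item The map $\bm\lambda\mapsto\bigl(\mathbb{E}|\sum\lambda_iX_i|^d\bigr)^{1/d}$ is convex, because it is a norm.
\item It is symmetric, because the $X_i$ are iid.
\item A convex symmetric function is Schur-convex, by the standard Hardy--Littlewood--Pólya / Birkhoff argument: if $\bm x\prec\bm y$, then $\bm x$ is a convex combination of permutations of $\bm y$.
\end{itemize}
This argument in fact works for every $d\ge1$. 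Beyond that, the proof is a routine substitution.
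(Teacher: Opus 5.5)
Your proposal is correct and follows the paper, which also treats the proposition as an immediate specialization of \Cref{thm - main} to $\varphi=\|\cdot\|_{X,d}$, using the Schur-convexity result of \cite{Chavez1} for even $d$. Your optional self-contained argument is also valid: Minkowski's inequality makes the map convex, it is symmetric because the $X_i$ are iid, and convex plus symmetric gives Schur-convexity. It shows the chain in fact holds for every $d\geq 1$, so the evenness of $d$ is needed only for the moment and cumulant expansions used later in the paper.
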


\section{Small values of $d$}\label{sec - d small}

Recall that we seek to make the inequalities in \Cref{thm - main} as sharp as possible. More precisely, we shall focus on this question in the case of random vector norms, which depends on a parameter $d$. Since
\[
\|G\|_{X,d}^d = \sum_{\boldsymbol{\pi}\vdash d} \frac{\kappa_{\boldsymbol{\pi}} c_{\boldsymbol{\pi}}}{ y_{\boldsymbol{\pi}}},
\]
the information required to compute a random vector norm for a given $d\geq 2$ is essentially the number of closed walks $c_k$ for even $k\leq d$. Therefore, we may naturally reframe our problem as follows: Given an even $m\geq 2$, which function optimizes each inequality in \Cref{prop - random}, in the closure of the union of the sets of random vector norms for $d\in\{2,4,\dots,m\}$? Providing an answer to this question could provide new sharp inequalities relating the numbers of closed walks up to a given order and the spectral information of a graph.

We do not have a complete answer to this question. However, for small values of $d$, we may provide tight inequalities. Indeed, the case $d=2$ is almost trivial since in that case, \eqref{thm:Cumulants} ensures that
\begin{equation*}
	\norm{G}_{X,2}^2=\sum_{\boldsymbol{\pi}\vdash 2} \frac{\kappa_{\boldsymbol{\pi}} c_{\boldsymbol{\pi}} }{ y_{\boldsymbol{\pi}}} = \frac{\kappa_1^2 c_1^2}{(1!)^2\, 2!} + \frac{\kappa_2 c_2}{(2!)^1 \, 1!}.
\end{equation*} 
Recall that for simple graphs, $c_1=0$. Hence, we have
\[
\norm{G}_{X,2}^2= \frac{\kappa_2 c_2}{2} = \frac{\kappa_2}{2} \|G\|_2^2.
\]
Therefore, the case of $d=2$ always corresponds to a multiple of the Schatten 2-norm and there is only one possible function to consider.

\subsection{The case $d=4$}

In this case, \cref{thm:Cumulants}, together with the fact that $c_1$ is always equal to $0$, yields
\[
\norm{G}_{X,4}^4 = \frac{\kappa_2^2 c_2^2}{(2!)^2\, 2!} + \frac{\kappa_4 c_4}{(4!)^1 \, 1!} = \frac{\kappa_2^2 c_2^2}{8} + \frac{\kappa_4 c_4}{24}. 
\]Therefore, to make the inequalities in \Cref{thm - main} as sharp as possible, we seek to optimize the ratio
\[
\frac{\norm{G}_{X,4}^4}{\norm{G_2}_{X,4}^4} = \frac{3\kappa_2^2 c_2^2 + \kappa_4 c_4}{3\kappa_2^2 c_2^2(G_2) + \kappa_4 c_4(G_2)},
\]
where $G_2$ is either $M_n$, $K_n$ or $K_{1,1}$. Recall that $\kappa_2 = \sigma^2$, the variance of the random variable $X$. As such, we have $\kappa_2>0$ since $X$ is nonconstant. We write
\[
\frac{\norm{G}_{X,4}^4}{\norm{G_2}_{X,4}^4} = \frac{3 c_2^2 + \frac{\kappa_4}{\kappa_2^2} c_4}{3 c_2^2(G_2) + \frac{\kappa_4}{\kappa_2^2} c_4(G_2)}.
\]
Thus, we want to optimize the right-hand side, which corresponds to the function 
\[
f(x) = \frac{3 c_2^2 + c_4 x}{3 c_2^2(G_2) + c_4(G_2) x}
\]
Moreover, $\kappa_4 = \mu_4 - 3\sigma^4$, where $\mu_k = \mathbb{E}\bigl[(X-\mathbb{E}[X])^k\bigr]$ is the $k$th central moment of $X$. Cauchy--Schwarz ensures that $\mu_4 \geq \mu_2^2 = \sigma^4.$ Therefore, it follows that
\[
\kappa_4 = \mu_4 - 3\sigma^4 \geq -2\sigma^4 = -2\kappa_2^2,
\]
which means that $x = \kappa_4/\kappa_2^2 \geq -2$. Hence, we want to optimize $f$ over the domain $[-2,\infty)$. Since $f$ is a quotient of two linear functions, it is easily verified that $f$ is monotone. Therefore, the function attains its minimum and maximum at the endpoints of its domain, namely 
\[
f(-2) = \frac{3 c_2^2 -2 c_4}{3 c_2^2(G_2) - 2c_4(G_2)} \qquad \text{and}\qquad \lim_{x\to\infty} f(x) = \frac{c_4}{c_4(G_2)}.
\]
These two extremal cases correspond, respectively, to the random vector norm induced by the Rademacher distribution and to the Schatten $4$-norm. Hence, when $d=4$, the sharpest inequalities obtained from \Cref{prop - random} are always realized by one of these two choices. 

To state the result, we write $s^+ := \max\{0,s\}$ for the positive part of a real number, and we recall that the \emph{first Zagreb index} of a graph $G$ is $Z_1 := \sum_i d_i^2$, where $d_i$ denotes the degree of vertex $i$. This invariant is closely related to the number of closed walks of length $4$: if $m$ is the number of edges of $G$ and $q$ the number of squares (i.e., the number of $4$-cycles), then
\[
c_4 = 2Z_1 - 2m + 8q.
\]
This identity is useful in proving the following theorem.

\begin{theorem}\label{thm - d=4}
	Let $G$ be a simple graph of order $n$ with eigenvalues $\bm{\lambda}=(\lambda_1,\dots,\lambda_{n})$. Then\vspace{3pt}
	
	\begin{minipage}{.46\textwidth}
		\begin{enumerate}
			\setlength{\itemindent}{-12pt}
			\item[(a)] $\frac{c_{4}}{2\lfloor n/2 \rfloor} \leq \lambda_1^4 \leq \frac{c_4}{1+(n-1)^{-3}}$,
			\item[(b)] $ 8c_4+6(c_{2}^{2}-2c_{4})^+ \leq \|G\|_*^4$,\vspace{5pt}
		\end{enumerate}
	\end{minipage}%
	\begin{minipage}{.54\textwidth}
		\begin{enumerate}
			\setlength{\itemindent}{-12pt}
			\item[(c)] $\frac{3c_{2}^{2}-2c_{4}}{n (n - 1) (n^2 + 3 n - 6)} \leq \lambda_{n}^4 \leq \frac{4c_4-3( 2c_4-c_2^2)^{\!+}}{8}$,
			\item[(d)] $\beta^4 \leq \frac{3c_{2}^{2}-2c_{4}}{12\lfloor n/2 \rfloor^2-4\lfloor n/2 \rfloor}$,%\vspace{5pt}
		\end{enumerate}
	\end{minipage}
	
	\noindent where
	\[
	\beta:= \begin{cases}
		\frac{2}{n} \sum_{j=1}^{n/2} \lambda_j \quad &\text{if } n\text{ is even;}\\
		\frac{2}{n-1} \sum_{j=1}^{(n-1)/2} \lambda_j + \frac{2}{n-1}\min\Bigl\{0,\, \lambda_{\frac{n+1}{2}}\Bigr\} \quad &\text{if } n\text{ is odd.}
	\end{cases}
	\]
	In particular, if $G$ is bipartite, then the inequalities reduce to 
	
	\begin{minipage}{.46\linewidth}
		\begin{enumerate}
			\setlength{\itemindent}{-12pt}
			\item[(a)] $ \frac{c_{4}}{2\lfloor n/2 \rfloor} \leq \lambda_{1}^4 \leq \frac{4c_4-3( 2c_4-c_2^2)^{\!+}}{8}$,\vspace{7pt}
		\end{enumerate}
	\end{minipage}%
	\begin{minipage}{.51\linewidth}
		\begin{enumerate}
			\setlength{\itemindent}{-12pt}
			\item[(b)] $8c_4+6(c_{2}^{2}-2c_{4})^+ \leq \|G\|_*^4 \leq \frac{4\lfloor n/2 \rfloor^3 (3c_{2}^{2}-2c_{4})}{3\lfloor n/2 \rfloor-1}$.\vspace{-3pt}
		\end{enumerate}
	\end{minipage}
	
	\noindent Moreover, all of the inequalities are sharp and they are best possible among the random vector norms with $d\in\{2,4\}$.
\end{theorem}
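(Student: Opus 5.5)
The plan is to specialize \Cref{prop - random} to $d=4$ and use the reduction carried out just before the statement. Every random vector norm with $d\in\{2,4\}$ gives a ratio $\|G\|_{X,4}^4/\|G_2\|_{X,4}^4$ that is a monotone linear-fractional function $f(x)$ of $x=\kappa_4/\kappa_2^2\in[-2,\infty)$ (the case $d=2$ being the Schatten $2$-norm). So only the two endpoint functionals can be optimal:
\begin{itemize}
\item the Rademacher norm ($x=-2$), with $\|G\|^4\propto 3c_2^2-2c_4$;
\item the Schatten $4$-norm (the limit $x\to\infty$), with $\|G\|^4\propto c_4$.
\end{itemize}
For each inequality in \eqref{eq - random}, raised to the fourth power, I will compute both endpoint bounds and keep the better one. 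When the better one depends on $G$, I keep the pointwise better one, which is what produces the $(\cdot)^+$ terms.

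The first step is to tabulate $c_2,c_4$ for the extremal graphs using \Cref{prop:Spectra}. Write $h=\lfloor n/2\rfloor$.
\begin{itemize}
\item $M_n$: $c_2=c_4=2h$, so $3c_2^2-2c_4=12h^2-4h$.
\item $K_n$: $c_2=n(n-1)$ and $c_4=(n-1)^4+(n-1)$. Then $3c_2^2-2c_4$ should factor as $n(n-1)(n^2+3n-6)$; I will check this by expanding, and it agrees at $n=2$.
\item $K_{r,s}$: $c_2=2rs$ and $c_4=2(rs)^2$, so $3c_2^2-2c_4=8(rs)^2$.
\end{itemize}

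Plugging these values in gives the individual items.
\begin{itemize}
\item The Rademacher version of $\beta\le \|G\|/(n-1)^{-1}\ldots$ through $M_n$ gives (d).
\item The Rademacher version of $|\lambda_n|\ge\|G\|/\|K_n\|$ gives the lower bound in (c).
\item The Schatten version of $\lambda_1\le (n-1)\|G\|/\|K_n\|$ gives $\lambda_1^4\le c_4(n-1)^4/((n-1)^4+n-1)$, which is the upper bound in (a).
\item The Schatten version of the $M_n$ bound gives the lower bound $c_4/(2h)$ in (a).
\item Through $K_{r,s}$, the two endpoints give $\lambda_n^4\le c_4/2$ and $\lambda_n^4\le(3c_2^2-2c_4)/8$. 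Their minimum is exactly $\bigl(4c_4-3(2c_4-c_2^2)^+\bigr)/8$, since the second is the smaller one precisely when $2c_4>c_2^2$.
\item Likewise $\|G\|_*^4/16$ dominates both $c_4/2$ and $(3c_2^2-2c_4)/8$. Their maximum, multiplied by $16$, is $8c_4+6(c_2^2-2c_4)^+$, which gives (b).
\end{itemize}
For bipartite $G$, $|\lambda_n|=\lambda_1$ and $\beta=\|G\|_*/(2h)$ by \Cref{lemma:K2}. The bipartite list then follows by substitution, the upper bound on $\|G\|_*^4$ being (d) rewritten. Sharpness is inherited from \Cref{prop - random}: the relevant inequalities are equalities at $M_n$, $K_n$ and $K_{r,s}$ respectively.

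It remains to justify "best possible" where only one endpoint is listed. By the monotonicity of $f$, its direction is governed by the sign of $c_4\,c_2(G_2)^2-c_2^2\,c_4(G_2)$.
\begin{itemize}
\item For $K_{r,s}$, $c_4(G_2)=c_2(G_2)^2/2$, so the sign is that of $2c_4-c_2^2$. This is exactly the case split already encoded in the $(\cdot)^+$ terms.
\item For $M_n$, the lower bound on $\lambda_1$ needs the Schatten endpoint to win, i.e.\ $c_2^2\le 2h\,c_4$. I would derive this from Cauchy--Schwarz over the nonzero eigenvalues, which gives $c_2^2\le \operatorname{rank}(A)\,c_4\le n\,c_4$; this settles $n$ even.
\item For $K_n$ the analogous comparison is needed, in both directions: the Schatten endpoint should win the upper bound on $\lambda_1$ and the Rademacher endpoint the lower bound on $\lambda_n^4$. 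This amounts to the sign of $c_4\,n^2(n-1)^2-c_2^2\bigl((n-1)^4+n-1\bigr)$. I would try to control it using $\operatorname{tr}A=0$ together with the interlacing information $\lambda_1\ge|\lambda_n|$.
\end{itemize}

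The main obstacle is this endpoint comparison for odd $n$ in the $M_n$ bound, and in general for the $K_n$ bounds. Crude zero-sum power-mean inequalities fail there; for instance, zero-sum vectors of length $5$ can have $(\sum x_i^2)^2/\sum x_i^4>4$. If the comparison cannot be closed, I would interpret "best possible" pointwise, as the better of the two endpoint bounds, exactly as was done for $K_{r,s}$.
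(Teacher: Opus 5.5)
Your specialization of \Cref{prop - random} matches the paper. The endpoint reduction is right, and so are the values of $c_2,c_4$ for $M_n$, $K_n$ and $K_{r,s}$ (including $3c_2^2-2c_4=n(n-1)(n^2+3n-6)$ for $K_n$). Each item, the $(\cdot)^+$ case split through $K_{r,s}$, and the bipartite reduction all follow, so every inequality in the statement and its sharpness are established.

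\textbf{The gap is in the final clause}, that these bounds are best possible among random vector norms with $d\in\{2,4\}$. For $M_n$ and $K_n$ the statement lists a single endpoint. That is justified only if this endpoint wins for \emph{every} graph, i.e.\ only if
\[
c_2^2\le 2\lfloor n/2\rfloor\, c_4
\qquad\text{and}\qquad
n(n-1)\,c_4\le (n^2-3n+3)\,c_2^2
\]
hold for all simple graphs of order $n$. You prove the first only for even $n$; your rank/Cauchy--Schwarz argument is valid there and simpler than the paper's. You do not prove the second at all. Your fallback of reading ``best possible'' pointwise does not rescue the statement. It would replace the single expressions in (a), the lower bound in (c), and (d) (and hence the bipartite upper bound on $\|G\|_*^4$) by a min or max of two expressions, which is a different theorem.

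Both missing comparisons have short proofs.

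\emph{The $K_n$ comparison.} Contrary to your remark, no interlacing or graph structure is needed. The sharp kurtosis bound for zero-mean vectors (Dal\'en's inequality) says
\[
\sum_i x_i^4\le\frac{n^2-3n+3}{n(n-1)}\Bigl(\sum_i x_i^2\Bigr)^{2}
\qquad\text{whenever }\sum_i x_i=0.
\]
Applied to $\bm{\lambda}$ via $\operatorname{tr}A=0$, this is exactly the required inequality. Its extremizer is $(n-1,-1,\dots,-1)=\operatorname{spec}(K_n)$.

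\emph{The $M_n$ comparison for odd $n$.} Your length-$5$ example correctly shows that zero-sum information alone is insufficient, and the paper uses graph structure. With $r=\lfloor n/2\rfloor$, write $c_2=2m$ and $c_4=2Z_1-2m+8q\ge 2(Z_1-m)$; it then suffices to show $m^2\le r(Z_1-m)$. This is done by cases on $m$:
\begin{itemize}
\item if $m\le r$, use $Z_1\ge 2m$;
\item if $m\ge r+2$, use Cauchy--Schwarz on the degrees, $Z_1\ge 4m^2/n$;
\item if $m=r+1$ and $n=2r+1$, use integrality. The degree sequence majorizes $(2,1,\dots,1)$, so $Z_1\ge 2r+4$ and hence $r(Z_1-m)\ge r(r+3)\ge (r+1)^2$.
\end{itemize}
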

\begin{proof}
	By the above discussion, the minimum and maximum of $\norm{G}_{X,4}^4/\norm{G_2}_{X,4}^4 $ is realized by
	\[
	\frac{3 c_2^2 -2 c_4}{3 c_2^2(G_2) - 2c_4(G_2)} \qquad \text{or}\qquad \frac{c_4}{c_4(G_2)}.
	\]
	From the ordering of the $\ell^p$ vector norms, we have $c_2^2(G_2) \geq c_4(G_2)$ for any graph $G_2$. Hence,
	\[
	3 c_2^2(G_2) - 2c_4(G_2) \geq 3 c_2^2(G_2) -2 c_2^2(G_2) = c_2^2(G_2) >0,
	\]
	and it follows that 
	\begin{equation}\label{eq - order d=4}
		\frac{3c_{2}^{2}-2c_{4}}{3c_{2}^{2}(G_{2})-2c_{4}(G_{2})} \geq \frac{c_{4}}{c_{4}(G_{2})}
	\end{equation}
	if and only if 
	\begin{equation}\label{eq - order2}
		c_{2}^{2}c_{4}(G_{2}) \geq c_{4}c_{2}^{2}(G_{2}).
	\end{equation}
	\medskip
	First consider the case $G_2=M_n$, where $c_2(G_2)=c_4(G_2)=2\lfloor n/2 \rfloor$ and \eqref{eq - order2} becomes
	\[
	c_{2}^{2} \geq 2\lfloor n/2 \rfloor c_{4}.
	\]
	We claim this is never satisfied, except in the equality case, where the maximum equals the minimum. Indeed, let $r:=\lfloor n/2\rfloor$, $m$ be the number of edges in the graph and $q$ the number of squares in the graph. Then we have $c_2=2m$ and $c_4=2Z_1-2m+8q$, and it is enough to show that
	\begin{equation}\label{eq - zag}
		m^2\le r(Z_1-m),
	\end{equation}
	because then
	\[
	c_2^2=4m^2\le 4r(Z_1-m)\le 2rc_4.
	\]
	First assume that $m\le r$. Since $Z_1\ge \sum_i d_i=2m$, it then follows that
	\[
	r(Z_1-m)\ge r(2m-m) = rm\ge m^2,
	\]
	which proves \eqref{eq - zag}.
	Assume now that $m=r+1$. If $n$ is even, then $n=2r$. By Cauchy--Schwarz,
	\[
	Z_1 = \sum_{i} d_i^2 \ge \frac{1}{n} \Big( \sum_{i} d_i \Big)^{\!2} = \frac{(2m)^2}{n}=\frac{2m^2}{r}.
	\]
	Hence, since $m>r$,
	\[
	r(Z_1-m)\ge 2m^2-rm>m^2.
	\]
	Suppose on the other hand that $n$ is odd, that is $n=2r+1$. Since $\sum_{i=1}^n d_i = 2m = 2r+2$, the degree sequence $\bm d =(d_1,\dots,d_n)$ majorizes $\bm b =(2,1,\dots,1)$, which is the most balanced nonnegative integer vector of length $2r+1$ with sum $2r+2$. Since $x\mapsto x^2$ is convex, the function $\bm x\mapsto \sum_i x_i^2$ is Schur-convex. Hence,
	\[
	Z_1 = \sum_{i=1}^{n} d_i^2 \ge \sum_{i=1}^{n} b_i^2 = 2^2+(2r)\cdot 1^2 = 2r+4,
	\]
	which implies that
	\[
	r(Z_1-m)\ge r\bigl((2r+4)-(r+1)\bigr)=r(r+3)\ge (r+1)^2=m^2.
	\]
	Now, if $m\ge r+2$, then Cauchy--Schwarz gives $Z_1\ge \frac{4m^2}{2r+1}$, and therefore,
	\[
	r(Z_1-m)\ge \frac{4rm^2}{2r+1}-rm.
	\]
	Observe that
	\[
	\frac{4rm^2}{2r+1}-rm\ge m^2 \quad\iff\quad m\ge \frac{r(2r+1)}{2r-1}.
	\]
	Since $r\ge 1$ and $m\ge r+2$, this always holds, and it thus follows that we have
	\[
	\frac{3c_{2}^{2}-2c_{4}}{3c_{2}^{2}(M_n)-2c_{4}(M_n)} \leq \frac{c_{4}}{c_{4}(M_n)}.
	\]
	
	\smallskip
	Now, for the case of $G_2=K_n$, the condition \eqref{eq - order2} becomes after simplifying
	\begin{equation}\label{eq - K_n}
		(n^2-3n+3)c_{2}^{2} \geq n(n-1)c_{4}.
	\end{equation}
	However, Dalèn proved in \cite{Dalen} that for all $\bm{x}\in\mathbb{R}^n$, we have
	\[
	\sum_{i=1}^n (x_i-\overline{\bm{x}})^4 \leq \frac{n^2-3n+3}{n(n-1)} \!\left(\sum_{i=1}^n (x_i-\overline{\bm{x}})^2 \right)^{\!\!2},
	\]
	where $\overline{\bm{x}} = \frac{x_1+x_2+\cdots+x_n}{n}$. Applying this bound with $x_i=\lambda_i$ yields \eqref{eq - K_n} directly. Therefore, the ordering \eqref{eq - order d=4} holds when $G_2=K_n$. 
	
	\smallskip 
	Lastly, if $G_2=K_{1,1}$, then $c_2(G_2)=c_4(G_2)=2$ and the condition becomes
	\[
	c_{2}^{2} > 2 c_{4}.
	\]
	In this case, the condition may or may not hold, so we need to consider both cases. The result then follows directly by taking either the random vector norm induced by the Rademacher distribution or the Schatten 4-norm in the inequalities of \Cref{thm - main}.
\end{proof}

\Cref{thm - d=4} can be interpreted as a fourth-order refinement of several classical spectral estimates. Indeed, the case \(d=2\) recovers inequalities depending only on \(c_2=2m\), such as the usual lower bounds for the graph energy and the standard estimates for the extremal eigenvalues. By contrast, the bounds above refine edge-count estimates by incorporating closed walks of length \(4\). Hence, \Cref{thm - d=4} does two things: it gives sharp fourth-order analogues of classical bounds, and it shows that these inequalities arise from a single majorization and Schur-convexity framework rather than from separate extremal arguments.

\subsection{Bound for spectral radius in terms of squares} 
As a corollary of the last result, we establish a bound for the smallest and largest eigenvalues of any graph in terms of squares. To this end, suppose $G$ is a graph of order $n$ with $m$ edges and $q$ squares, which implies that $c_4=2Z_1-2m+8q$, in which $Z_1=\sum_i d_i^2$ is the first Zagreb index. Zhou established in \cite{Zhou} that $Z_1$ satisfies
\begin{align}
	Z_1\leq n(2m-n+1)\label{eq:ZhouExp}
\end{align} 
whenever $G$ is connected, with equality when $G=K_n$. Using this bound, we have the following result only using the number of vertices, edges, and squares in $G$.

\begin{theorem}
	Suppose $G$ is a connected graph of order $n$ with $m$ edges and $q$ squares. Then the extremal eigenvalues of $G$ satisfy the following bounds, which are attained by $K_n$:
	\begin{align*}
		\sqrt[4]{\frac{4\bigl(m(3m+1)-2mn+n(n-1)-4q\bigr)}{n (n - 1) (n^2 + 3 n - 6)}} \leq |\lambda_n| \leq \lambda_1\leq \sqrt[4]{\frac{8q-2m+2n(2m-n+1)}{1+(n-1)^{-3}}}.
	\end{align*}
\end{theorem}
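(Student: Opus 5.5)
The plan is to combine the fourth-order bounds of \Cref{thm - d=4} with Zhou's inequality \eqref{eq:ZhouExp}. The only link needed is the identity $c_4=2Z_1-2m+8q$, together with $c_2=2m$. Since every quantity involved is nonnegative after taking fourth powers, the argument reduces to bounding $c_4$ and $3c_2^2-2c_4$ by expressions in $n,m,q$ alone, and then taking fourth roots, which is monotone.

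\emph{Upper bound.} \Cref{thm - d=4}(a) gives $\lambda_1^4\le c_4/(1+(n-1)^{-3})$. Since $G$ is connected, \eqref{eq:ZhouExp} gives $Z_1\le n(2m-n+1)$. Hence
\[
c_4=2Z_1-2m+8q\le 8q-2m+2n(2m-n+1).
\]
The denominator $1+(n-1)^{-3}$ is positive, so substituting this and taking fourth roots gives the stated bound on $\lambda_1$.

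\emph{Middle inequality.} The inequality $|\lambda_n|\le\lambda_1$ is the Perron--Frobenius fact already used in \Cref{lemma:CompleteBipartite}.

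\emph{Lower bound.} \Cref{thm - d=4}(c) gives $\lambda_n^4\ge (3c_2^2-2c_4)/\bigl(n(n-1)(n^2+3n-6)\bigr)$. Here $c_4$ enters with a negative sign, so an upper bound on $Z_1$ produces a lower bound on the numerator:
\[
3c_2^2-2c_4=12m^2-4Z_1+4m-16q\ge 12m^2-4n(2m-n+1)+4m-16q.
\]
This rearranges to $4\bigl(m(3m+1)-2mn+n(n-1)-4q\bigr)$. The denominator $n(n-1)(n^2+3n-6)$ is positive for $n\ge 2$, so dividing and taking fourth roots gives the bound. One technicality needs care: the fourth root requires a nonnegative radicand. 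If the new expression is negative, the inequality should be read as trivially true, since $|\lambda_n|\ge 0$. Otherwise monotonicity of $t\mapsto t^{1/4}$ on $[0,\infty)$ finishes the step.

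\emph{Sharpness.} For $K_n$, Zhou's bound is an equality. Both inequalities of \Cref{thm - d=4} used above are saturated by $K_n$, as \Cref{thm - main} guarantees for the third and fourth inequalities in \eqref{eq - main} with the Schatten and Rademacher choices. As a direct check, $c_4(K_n)=(n-1)^4+n-1$, so $c_4/(1+(n-1)^{-3})=(n-1)^4=\lambda_1^4$.

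I expect the main obstacle to be bookkeeping rather than anything conceptual: verifying the algebraic simplification of $3c_2^2-2c_4$, and getting the direction of each substitution right given the sign of $Z_1$ in each expression.
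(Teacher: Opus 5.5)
Your proposal is correct and is the paper's proof. Like the paper, you substitute $c_4=2Z_1-2m+8q$ and Zhou's bound \eqref{eq:ZhouExp} into the upper bound of \Cref{thm - d=4}(a) and the lower bound of \Cref{thm - d=4}(c), let the sign of $Z_1$ in each expression fix the direction, and note that $K_n$ gives equality at every step. Your caveat about a negative radicand is harmless but never triggered: $c_4\le c_2^2$ gives $4q\le 2m^2-Z_1+m$, and $Z_1\ge 4m^2/n$ with $m\ge n-1$ then gives
\[
m(3m+1)-2mn+n(n-1)-4q\ \ge\ \frac{(n-2)(3n-2)}{n}\ \ge\ 0 .
\]
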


\begin{proof}
	Applying the relation $c_4 = 2Z_1-2m+8q$ and Zhou's bound in \eqref{eq:ZhouExp} to the upper bound of $\lambda_1$ in \Cref{thm - d=4}, we get 
	\begin{align*}
		\lambda_1&\leq \sqrt[4]{\frac{c_4}{1+(n-1)^{-3}}} = \sqrt[4]{\frac{2Z_1-2m+8q}{1+(n-1)^{-3}}} \leq \sqrt[4]{\frac{8q-2m+2n(2m-n+1)}{1+(n-1)^{-3}}}.
	\end{align*}
	Likewise, doing the same to the lower bound of $|\lambda_n|$ in \Cref{thm - d=4}, we obtain
	\begin{align*}
		|\lambda_n| &\geq \sqrt[4]{\frac{3c_{2}^{2}-2c_{4}}{n (n - 1) (n^2 + 3 n - 6)}} = \sqrt[4]{\frac{12m^2-2(2Z_1-2m+8q)}{n (n - 1) (n^2 + 3 n - 6)}} \\
		&\geq \sqrt[4]{\frac{12m^2-2(8q-2m+2n(2m-n+1))}{n (n - 1) (n^2 + 3 n - 6)}} \\
		&= \sqrt[4]{4\,\frac{m(3m+1)-2mn+n(n-1)-4q}{n (n - 1) (n^2 + 3 n - 6)}}.
	\end{align*}
	Finally, each inequality is attained by $K_n$.
\end{proof}

%%%%%%%%%%%%%%%%%%%%%%%%%%%%%%%%%%%%%%%%%%%%%%%%%%%%%%%%%%%%%%%%%%%%%%%%%%%
%%%%%%%%%%%%%%%%%%%%%%%%%%%%%%%%%%%%%%%%%%%%%%%%%%%%%%%%%%%%%%%%%%%%%%%%%%%
%%%%%%%%%%%%%%%%%%%%%%%%%%%%%%%%%%%%%%%%%%%%%%%%%%%%%%%%%%%%%%%%%%%%%%%%%%%
\subsection{Spectral radii of square-free bipartite graphs}

Let us now establish a bound for the spectral radius of any square-free bipartite graph. Suppose $G$ is a graph of order $n$ with $m$ edges. A bound for $Z_1$ due to Zhou and Stevanovi\'c \cite{ZhouBipartite} states that
\begin{align}
	Z_1\leq n(n-1) \label{eq:ZagrebBound1}
\end{align} whenever $G$ contains no triangle or square and $n\geq 2$. Moreover, equality is attained if and only if $G=S_n=K_{1,n-1}$ or a Moore graph of diameter 2.

\begin{theorem}\label{thm:BipartiteRadius}
	Suppose $B$ is a square-free bipartite graph of order $n$ with $m$ edges. The spectral radius of $B$ satisfies the following bound, which is attained by $S_n$:
	\begin{align*}
		\lambda_1\leq \sqrt[4]{ n(n-1)-m}.
	\end{align*} 
\end{theorem}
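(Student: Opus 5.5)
We would use the upper bound on $\lambda_1$ for bipartite graphs given in \Cref{cor - Schatten} with $d=4$. For bipartite $G$ this reads $\lambda_1^4 \leq c_4(G)/2$, and equality holds for complete bipartite graphs. This is the fifth inequality of \Cref{thm - main} with $r=1$, $s=n-1$; it comes from $\bm{\lambda}\prec \frac{|\lambda_n|}{\sqrt{rs}}\bm\nu$ reversed, that is, the bipartite chain $\lambda_1 \le \sqrt{rs}\varphi(G)/\varphi(K_{r,s})$ applied to the Schatten $4$-norm. Concretely, since $\|K_{r,s}\|_4^4=2(rs)^2$, we get $\lambda_1^4\le c_4/2$, which is immediate for bipartite graphs anyway because $\lambda_n=-\lambda_1$ gives $c_4\ge 2\lambda_1^4$.

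Next we express $c_4$ combinatorially through the identity $c_4=2Z_1-2m+8q$. The graph $B$ is bipartite, so it has no triangles, and it is square-free, so $q=0$. Hence $c_4=2Z_1-2m$, and therefore
\[
\lambda_1^4\le \frac{c_4}{2}=Z_1-m.
\]
Since $B$ contains neither triangles nor squares, the Zhou--Stevanovi\'c bound \eqref{eq:ZagrebBound1} gives $Z_1\le n(n-1)$ for $n\ge 2$. The case $n=1$ is trivial, since then $\lambda_1=0$ and $m=0$. Substituting yields $\lambda_1^4\le n(n-1)-m$, which is the claim.

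For sharpness we check $S_n=K_{1,n-1}$. Its spectrum is $\{\sqrt{n-1},0,\dots,0,-\sqrt{n-1}\}$, so $\lambda_1^4=(n-1)^2$. It has $m=n-1$ edges and degrees $n-1,1,\dots,1$, so $Z_1=(n-1)^2+(n-1)=n(n-1)$. Thus $n(n-1)-m=(n-1)^2$, and equality holds, consistent with equality in both steps: $S_n$ is complete bipartite, and it attains equality in \eqref{eq:ZagrebBound1}.

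There is no real obstacle. The only point needing care is that ``square-free bipartite'' indeed supplies the triangle-free and square-free hypotheses required for \eqref{eq:ZagrebBound1}, and that $q=0$ in the closed-walk identity.
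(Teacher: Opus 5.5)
Your proof is correct and is essentially the paper's. The paper invokes the bipartite bound of \Cref{thm - d=4}, $\lambda_1^4\le Z_1-m-\tfrac{3}{2}(Z_1-m-m^2)^{+}$, and immediately discards the correction term to reach $\lambda_1^4\le Z_1-m$, which is exactly your bound $\lambda_1^4\le c_4/2$; it then applies $Z_1\le n(n-1)$ as you do. One minor slip: the relevant relation in \Cref{lemma:CompleteBipartite} is $\frac{|\lambda_n|}{\sqrt{rs}}\bm{\nu}\prec\bm{\lambda}$, not the reverse, but your direct argument $c_4\ge\lambda_1^4+\lambda_n^4=2\lambda_1^4$ makes this moot.
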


\begin{proof}
	The relation $c_4=2Z_1-2m+8q= 2Z_1-2m$ and \Cref{thm - d=4} imply that
	\begin{align*}
		\lambda_{1} \leq \sqrt[4]{Z_1-m-\tfrac{3}{2}( Z_1-m-m^2)^{+}} \leq \sqrt[4]{Z_1-m}.
	\end{align*}
	$B$ contains no triangle since it is bipartite. We can thus apply the bound in \eqref{eq:ZagrebBound1} to conclude
	\begin{align*}
		\lambda_1\leq \sqrt[4]{n(n-1)-m}.
	\end{align*} 
	Note that the bounds of \Cref{thm - d=4} and \eqref{eq:ZagrebBound1} both hold when $B=S_n$.
\end{proof}

We remark that $\sqrt[4]{n(n-1)-m}\leq \sqrt{m}$ if and only if $m\geq n-1$. Therefore, the inequality in \Cref{thm:BipartiteRadius} is sharper than the classic bound $\lambda_1\leq \sqrt{m}$ provided $B$ has enough edges. In particular, the new bound is an improvement whenever $B$ is a \emph{connected} bipartite graph.

\subsection{The case $d=6$}

Naturally, one may seek to explore the optimality of the bounds in \Cref{thm - main} in the case $d=6$. A direct computation reveals that 
\[
\|G\|_{X,6}^6 = \frac{\frac{1}{48}\kappa_{2}^3 c_2^3 + \frac{1}{48}\kappa_2 \kappa_4 c_2c_4 + \frac{1}{72}\kappa_3^2 c_3^2 +\frac{1}{720} \kappa_6 c_6}{\frac{1}{48}\kappa_{2}^3 c_2^3(G_2) + \frac{1}{48}\kappa_2 \kappa_4 c_2(G_2)c_4(G_2) + \frac{1}{72}\kappa_3^2 c_3^2(G_2) +\frac{1}{720} \kappa_6 c_6(G_2)}.
\]
While it is still possible to optimize each of the quantities $\frac{\|G\|_{X,d}}{\|G_2\|_{X,d}}$ for $G_2=M_n$, $K_n$ and $K_{r,s}$, the growing number of variables (i.e., $\kappa_2$, $\kappa_3$, $\kappa_4$ and $\kappa_6$), along with the conditions for them to be cumulants of a valid distribution, makes the problem substantially more complex. For instance, a detailed and tedious analysis reveals that the maximum of $\frac{\sqrt{rs}\|G\|_{X,d}}{\|K_{r,s}\|_{X,d}}$ is realized by different functions in three distinct regions. Namely, the maximum is
\begin{equation}\label{eq - cases d=6}
	\begin{cases}
		\frac{c_{6}}{2} ~~~ &\text{if } c_{2}^{2}-2c_{4}<0\le6c_{6}-3c_{2}c_{4}-2c_{3}^{2},\\
		\frac{c_{6}}{2} + \frac{15c_2(c_{2}^{2}-2c_{4})}{32} ~~~ &\text{if } 2(3c_{2}c_{4}+2c_{3}^{2}-6c_{6})^+\le3c_{2}(c_{2}^{2}-2c_{4}),\\
		\frac{c_{6}}{2}+\frac{5(2c_{3}^{2}+3c_{2}c_{4}-6c_{6})^{2}}{8(8c_{3}^{2}+18c_{2}c_{4}-3c_{2}^{3}-24c_{6})} ~~~ &\text{if } 3c_{2}(c_{2}^{2}-2c_{4})^+ < 2(3c_{2}c_{4}+2c_{3}^{2}-6c_{6}),
	\end{cases}
\end{equation}
which leads to the sharp inequality
\[
\|G\|_{\ast}^6 \geq 32c_{6} + 10\max\!\left\{0,\, 3c_2(c_{2}^{2}-2c_{4}),\, \frac{4(3c_{2}c_{4}+2c_{3}^{2}-6c_{6})^{2}}{8c_{3}^{2}+18c_{2}c_{4}-3c_{2}^{3}-24c_{6}} \right\}.
\]

Moreover, a similar result holds for the minimum. However, the minimum is attained by four different functions in four regions instead of three, distinct from the ones above. 

To completely solve this optimization problem, a similar analysis would need to be done for $G_2=M_n$ and $K_n$ as well, which quickly becomes difficult and, more importantly, impractical to work with in concrete settings where one would want to use these inequalities. Therefore, we shall focus on the first two regions described above, which corresponds to the Schatten $p$-norm and the random vector norm induced by the Rademacher distribution, as in the case $d=4$ of \Cref{thm - d=4}.

Moreover, let us make a key observation. Write
\[
A \;=\; 3c_{2}c_{4}+2c_{3}^{2}-6c_{6}, \qquad B \;=\; c_2^2 - 2c_4,
\]
and observe that the first two regions above are
\begin{itemize}
	\item[\textup{(I)}] $0 \geq \max\{A,B\}$, and
	\item[\textup{(II)}] $2A^{+} \leq 3c_2 B$.
\end{itemize}

The spectral character of each region is made clear by writing $B$ directly in terms of the eigenvalues. Since $c_k = \sum_i \lambda_i^k$, one computes
\[
B = \biggl(\sum_i \lambda_i^2\biggr)^{\!2} \!- 2\sum_i \lambda_i^4
= 2\sum_{i<j} \lambda_i^2\lambda_j^2 - \sum_i \lambda_i^4.
\]
Thus, $B \leq 0$ if and only if $\sum_i \lambda_i^4 \geq 2\sum_{i<j}\lambda_i^2\lambda_j^2$. This is precisely the case when the distribution of squared eigenvalues $(\lambda_i^2)$ is highly concentrated: when a single eigenvalue satisfies $|\lambda_1| \gg |\lambda_i|$ for all $i \geq 2$, both sums are dominated by $\lambda_1^4$, with the cross terms $\lambda_1^2\lambda_i^2$
$(i \geq 2)$ negligible by comparison, so $B \approx -\lambda_1^4 < 0$. Likewise, the same asymptotic substitution gives $A \approx -\lambda_1^6 < 0$. Region~(I) is therefore the regime where the single-eigenvalue approximation $c_k \approx \lambda_1^k$ is accurate, for which the complete graph $K_n$ is a classical example.

Conversely, when the squared eigenvalues are spread across many comparable values the cross terms dominate and $B > 0$. Bipartite graphs are a classical example: since their eigenvalues come in $\pm$-pairs $\{\pm\lambda_1,\ldots,\pm\lambda_m\}$ and all odd closed
walks vanish ($c_3 = 0$), one obtains
\[
B \;=\; 8\sum_{i < j} \lambda_i^2\lambda_j^2 \;\geq\; 0,
\]
with equality only when at most one $\lambda_i$ is nonzero. Hence every bipartite graph with at least two distinct nonzero eigenvalue magnitudes lies in Region~(II), as do odd sparse non-bipartite graphs such as $C_5$, $C_7$, and the Petersen graph, whose
spectra are similarly distributed. It is in this regime---where no single eigenvalue dominates, the spectral gap is harder to bound, and the interplay between the numbers $c_k$ is genuinely nontrivial---that the bounds of \Cref{thm - main} yield the most useful spectral information.

To highlight the fact that most graphs lie in Region~(II), we explicitly consider the number of graphs in each region for $n=3,4,5,6$ and $7$, which are presented in \Cref{tab:graph_regions_counts}. Note that Region (III) corresponds to the third case in \eqref{eq - cases d=6}. Since this region corresponds to the case where the maximum of $\frac{\sqrt{rs}\|G\|_{X,6}}{\|K_{r,s}\|_{X,6}}$ is attained by the Rademacher distribution, as it was the case when $d=4$, it is reasonable to expect that this distribution will continue to produce good, and often optimal, estimates for higher values of $d$. 

\begin{table}[ht]
	\centering
	\begin{tabular}{|c|r|r|r|r|} 
		\hline
		$n$ & $2^{\binom{n}{2}}$ & Region (I) & Region (II) & Region (III) \\ \hline
		3 & 8 & 0 & 8 & 0 \\
		4 & 64 & 1 & 63 & 0 \\
		5 & 1\,024 & 46 & 978 & 0 \\
		6 & 32\,768 & 1\,212 & 31\,436 & 120 \\
		7 & 2\,097\,152 & 39\,033 & 2\,040\,857 & 17\,262\\
		\hline
	\end{tabular} 
	\vspace{5pt}
	\caption{Number of labeled simple graphs in each region for $n\leq 7$.}
	\vspace{-7pt}
	\label{tab:graph_regions_counts}
\end{table}

\begin{remark}
	A similar reasoning leads us to consider the Schatten $d$-norms as the natural candidate to replace the minimum of $\frac{\|G\|_{X,d}}{\|G_2\|_{X,d}}$ for higher values of $d$. This case was already considered in \Cref{sec - Schatten}, where we obtained one seemingly new inequality, and new proofs of several classical bounds.
\end{remark}

\subsection{Rademacher random variables}

Let $X$ be a random variable following a Rademacher distribution. A direct computation reveals that the moment generating function of $X$ is $M_X(t)=\cosh(t) = (e^t+e^{-t})/2$ for $t\in \mathbb{R}$. Indeed,
\begin{align*}
	\mathbb{E}\bigl[e^{tX}\bigr]& = \mathbb{P}(X=1) \, e^{t} + \mathbb{P}(X=-1)\, e^{-t} = \frac{1}{2}\, e^{t} + \frac{1}{2}\, e^{-t}.
\end{align*} If $d\geq 2$ is even and $G$ has order $n$, then the identity \eqref{thm:Moments} ensures that 
\begin{equation}
	\norm{G}_{X,d}^d=[t^d] \prod_{k=1}^n \frac{e^{\lambda_kt}+e^{-\lambda_kt}}{2}.\vspace{-4pt}\label{eq:Rademacher}
\end{equation} 
Moreover, all the odd cumulants of $X$ vanish and, if $j$ is even, then $\kappa_j=\frac{2^j(2^j-1)B_j}{j}$, where $B_j$ denotes the $j$th Bernoulli number \cite{Gould}. The first few even Bernoulli numbers are $B_2=\frac{1}{6}$, $B_4=-\frac{1}{30}$ and $B_6=\frac{1}{42}$. Given a partition ${\boldsymbol{\pi}}=(\pi_1,\pi_2, \ldots, \pi_\ell)\vdash d$, write $p_{\bm{\pi}}=\pi_1\pi_2\cdots \pi_\ell$, $B_{\boldsymbol{\pi}}=B_{\pi_1}B_{\pi_2}\cdots B_{\pi_\ell}$ and $s_{\bm{\pi}} = \prod_{j=1}^\ell (2^{\pi_j}-1)$, which counts the number of ways, for blocks of sizes $\pi_1,\pi_2,\dots,\pi_\ell$, to choose a nonempty subset inside each block. Using these notations, it follows immediately from \eqref{thm:Cumulants} that if $d\geq 2$ is even and $G$ is a graph of order $n$, then
\begin{align*}\label{prop:PowerSumRademacher}
	\norm{G}_{X,d}^d=2^d\sum_{ \boldsymbol{\pi} \vdash \smash{\frac{d}{2}}}\frac{s_{2\bm{\pi}} B_{2\boldsymbol{\pi}}}{p_{2\boldsymbol{\pi}}y_{2\boldsymbol{\pi}}}c_{2\boldsymbol{\pi}}.
\end{align*} 
In particular, we have
\begin{enumerate}
	\item $\norm{G}_{X,4}^4=\frac{3c_2^2-2c_4}{4!},$
	\item $\norm{G}_{X,6}^6=\frac{15c_2^3-30c_2c_4+16c_6}{6!}$,
	\item $\norm{G}_{X,8}^8=\frac{105c_2^4-420c_2^2c_4+448c_2c_6+140c_4^2-272c_8}{8!}$.
\end{enumerate}
We now evaluate the norms of $M_n$, $K_n$ and $K_{r,s}$, which give the equivalence constants in \Cref{thm - main}.

\begin{proposition}\label{prop:SpecialNormsRademacher}
	Suppose $d\geq 2$ is even. The norms $\norm{\cdot}_{X,d}$ satisfy the following:
	\begin{enumerate}
		\item[(a)] $\norm{K_n}_{X,d}^d=\displaystyle \frac{2^{d+1-n}}{d!} \sum_{k=0}^{n-1} \binom{n-1}{k} k^d$,
		\item[(b)] $\norm{M_n}_{X,d}^d= \displaystyle \frac{2^d}{4^{\lfloor n/2\rfloor}d!} \sum_{k=0}^{2\lfloor n/2\rfloor} \binom{2\lfloor n/2\rfloor}{k} \bigl(\lfloor n/2\rfloor-k\bigr)^d$,
		\item[(c)] $\norm{K_{r,s}}_{X,d}^d= \displaystyle \frac{(4rs)^{d/2}}{2d!}$.
	\end{enumerate}
\end{proposition}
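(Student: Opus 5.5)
The plan is to use the moment formula \eqref{thm:Moments} in its Rademacher form \eqref{eq:Rademacher}: for each graph, write down the spectrum from \Cref{prop:Spectra}, form the product of the factors $\cosh(\lambda_k t)$, expand it as a finite sum of exponentials, and read off the coefficient of $t^d$. No cumulant computations are needed. The one fact used repeatedly is
\[
[t^d]e^{at}=\frac{a^d}{d!}.
\]

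For (c), the spectrum of $K_{r,s}$ is $\{\sqrt{rs},0,\dots,0,-\sqrt{rs}\}$. Since $\cosh(0)=1$, the product collapses to
\[
\cosh^2(\sqrt{rs}\,t)=\frac{1+\cosh(2\sqrt{rs}\,t)}{2}.
\]
The constant term does not contribute for $d\geq 2$. Because $d$ is even, the $t^d$ coefficient of $\cosh(2\sqrt{rs}\,t)$ is $(2\sqrt{rs})^d/d!=(4rs)^{d/2}/d!$. Dividing by $2$ gives $\frac{(4rs)^{d/2}}{2\,d!}$.

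For (b), put $p=\lfloor n/2\rfloor$. The spectrum of $M_n$ has $p$ eigenvalues equal to $1$ and $p$ equal to $-1$, plus a $0$ when $n$ is odd, and that $0$ contributes the factor $1$. As $\cosh$ is even, the product is $\cosh^{2p}(t)=2^{-2p}(e^t+e^{-t})^{2p}$. The binomial theorem gives
\[
\cosh^{2p}(t)=4^{-p}\sum_{k=0}^{2p}\binom{2p}{k}e^{(2p-2k)t}.
\]
Extracting the $t^d$ coefficient gives $4^{-p}\sum_k\binom{2p}{k}(2p-2k)^d/d!$. Since $(2p-2k)^d=2^d(p-k)^d$, this is exactly the stated formula.

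For (a), the spectrum of $K_n$ is $\{n-1,-1,\dots,-1\}$, so the product is $\cosh((n-1)t)\cosh^{n-1}(t)$. I expect this to be the main obstacle: a naive double expansion produces a double sum, not the single sum in the statement. The plan is to expand
\[
\cosh^{n-1}(t)=2^{1-n}\sum_{j=0}^{n-1}\binom{n-1}{j}e^{(n-1-2j)t}
\]
and multiply it by $\frac12\bigl(e^{(n-1)t}+e^{-(n-1)t}\bigr)$. The exponents then become $2(n-1-j)$ and $-2j$, respectively.

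The step that should collapse the double sum is to reindex with $k=n-1-j$ in the first family, using $\binom{n-1}{j}=\binom{n-1}{n-1-j}$. After this, the two families contribute $\frac{(2k)^d}{d!}$ and $\frac{(-2k)^d}{d!}=\frac{(2k)^d}{d!}$ respectively, with the same binomial weight. Their sum is $2^{1-n}\cdot\frac12\cdot 2\sum_k\binom{n-1}{k}\frac{2^dk^d}{d!}$. This simplifies to $\frac{2^{d+1-n}}{d!}\sum_{k}\binom{n-1}{k}k^d$, which is (a). The evenness of $d$ is what lets the two families merge.
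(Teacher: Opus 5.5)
Your proposal is correct and follows essentially the same route as the paper: you expand the product of the $\cosh(\lambda_k t)$ factors into exponentials via the binomial theorem and read off the $t^d$ coefficient, and for (a) you merge the two families using $\binom{n-1}{k}=\binom{n-1}{n-1-k}$ together with the evenness of $d$. The only cosmetic difference is in (c): you apply $\cosh^2 x=\frac{1}{2}(1+\cosh 2x)$ directly, whereas the paper takes $m=2$ in the general $\cosh^m$ expansion and then rescales via $\|K_{r,s}\|_{X,d}^d=(rs)^{d/2}\|K_{1,1}\|_{X,d}^d$.
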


\begin{proof}
	Relation \eqref{eq:Rademacher} tells us that $\norm{K_n}_{X,d}^d$ equals the $d$th Taylor coefficient of $f(t)$, where
	\begin{align*}
		f(t)&= \frac{e^{(n-1)t}+e^{-(n-1)t}}{2} \,\bigg( \frac{e^{t}+e^{-t}}{2}\bigg)^{\!n-1} \!= \frac{ e^{(n-1)t}+e^{-(n-1)t}}{2^n}\sum_{k=0}^{n-1}\binom{n-1}{k} e^{(n-1-2k)t}\\
		&= \sum_{k=0}^{n-1}\binom{n-1}{k}\frac{ e^{(2n-2-2k)t}+e^{-2kt}}{2^n} = \sum_{k=0}^{n-1}\binom{n-1}{k}\sum_{j=0}^{\infty} \frac{(2n-2-2k)^j+(-2k)^j}{2^n j!} \,t^j \\
		&=\sum_{j=0}^{\infty} \Bigg( \frac{2^{j-n}}{j!}\sum_{k=0}^{n-1}\binom{n-1}{k} \Big[(n-1-k)^j+(-k)^j\Big] \Bigg) t^j.
	\end{align*} 
	Observe that
	\begin{equation*}
		\sum_{k=0}^{n-1}\binom{n-1}{k} (n-k-1)^j = \sum_{k=0}^{n-1}\binom{n-1}{k} k^j.
	\end{equation*}
	Therefore, we have
	\begin{align*}
		f(t)=\sum_{j=0}^{\infty} \Bigg( \frac{2^{j-n}}{j!}\sum_{k=0}^{n-1}\binom{n-1}{k} k^j\Big[1+(-1)^j\Big] \Bigg) t^{j},
	\end{align*}
	which implies part (a). Similarly, we also have 
	\begin{align*}
		\Big(\frac{e^{t}+e^{-t}}{2}\Big)^{\!m} = \frac{1}{2^m}\sum_{k=0}^{m}\binom{m}{k} e^{(m-2k)t}= \sum_{j=0}^\infty \Bigg(\frac{1}{2^m j!}\sum_{k=0}^{m}\binom{m}{k} (m-2k)^j \Bigg) t^j.
	\end{align*}
	Taking $m=2\lfloor n/2\rfloor$ in the above and taking the $d$th Taylor coefficient yields (b), and taking $m=2$ while noting the fact that $\norm{K_{r,s}}_{X,d}^d= (rs)^{d/2} \norm{K_{1,1}}_{X,d}^d$ gives (c).
\end{proof}

Using the above values, one may obtain several bounds between $\lambda_1, |\lambda_n|, \|G\|_\ast$ and the random vector norms generated by the Rademacher distribution by exploiting \Cref{thm - main}. In the following, we consider the inequalities obtained in the particular case of $d=6$.

\begin{theorem}\label{thm - Rademacher - d=6}
	Let $G$ be a simple graph of order $n$ with eigenvalues $\bm{\lambda}=(\lambda_1,\dots,\lambda_{n})$. Then, if $r:=\lfloor \frac{n}{2} \rfloor $,\vspace{3pt}
	
	\begin{minipage}{.618\linewidth}
		\begin{enumerate}
			\setlength{\itemindent}{-17pt}
			\item[(a)] \!$\frac{15c_2^3-30c_2c_4+16c_6}{8r(15r^{2}-15r+4)} \leq \lambda_1^6 \leq \frac{(n-1)^5(15c_2^3-30c_2c_4+16c_6)}{n (n^4 + 10 n^3 - 5 n^2 - 70 n + 80)},$
			\item[(b)] \!$ \frac{15c_2^3-30c_2c_4+16c_6}{n (n-1)(n^4 + 10 n^3 - 5 n^2 - 70 n + 80)} \leq \lambda_{n}^6 \leq \frac{15c_2^3-30c_2c_4+16c_6}{32}$,\vspace{6pt}
		\end{enumerate}
	\end{minipage}%
	\begin{minipage}{.37\linewidth}
		\begin{enumerate}
			\setlength{\itemindent}{-12pt}
			\item[(c)] $30c_2^3-60c_2c_4+32c_6 \leq \|G\|_*^6$,\\[-8pt]
			\item[(d)] $\beta^6 \leq \frac{15c_2^3-30c_2c_4+16c_6}{8r(15r^{2}-15r+4)}$,
		\end{enumerate}
	\end{minipage}
	
	\noindent where
	\[
	\beta:= \begin{cases}
		\frac{2}{n} \sum_{j=1}^{n/2} \lambda_j \quad &\text{if } n\text{ is even;}\\
		\frac{2}{n-1} \sum_{j=1}^{(n-1)/2} \lambda_j + \frac{2}{n-1}\min\Bigl\{0,\, \lambda_{\frac{n+1}{2}}\Bigr\} \quad &\text{if } n\text{ is odd.}
	\end{cases}
	\]
	In particular, if $G$ is bipartite, then the inequalities reduce to 
	\[
	\frac{15c_2^3-30c_2c_4+16c_6}{r(15r^{2}-15r+4)} \!\leq\! 8\lambda_1^6 \!\leq\! \frac{15c_2^3-30c_2c_4+16c_6}{4} \!\leq\! \frac{\|G\|_*^6}{8} \!\leq\! \frac{r^5(15c_2^3-30c_2c_4+16c_6)}{15r^{2}-15r+4}.
	\]
\end{theorem}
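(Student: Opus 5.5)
The plan is to apply \Cref{prop - random} (equivalently \Cref{thm - main}) with $X$ Rademacher and $d=6$. We raise every inequality in \eqref{eq - random} to the sixth power and substitute
\[
\norm{G}_{X,6}^6=\frac{15c_2^3-30c_2c_4+16c_6}{6!}.
\]
The work then reduces to computing the three normalizing constants $\norm{M_n}_{X,6}^6$, $\norm{K_n}_{X,6}^6$ and $\norm{K_{r,s}}_{X,6}^6$. The factor $6!$ cancels in every ratio, so only the numerators $15c_2^3-30c_2c_4+16c_6$ evaluated on these graphs matter.

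The constants are most easily obtained from the closed-walk counts read off \Cref{prop:Spectra}; one could also use \Cref{prop:SpecialNormsRademacher}.
\begin{itemize}
\item For $M_n$ we have $c_2=c_4=c_6=2r$ with $r=\lfloor n/2\rfloor$. The numerator is $120r^3-120r^2+32r=8r(15r^2-15r+4)$.
\item For $K_{r,s}$ we have $c_k=2(rs)^{k/2}$ for even $k$, so the numerator is $(120-120+32)(rs)^3=32(rs)^3$. Hence $\sqrt{rs}^{\,6}/\norm{K_{r,s}}^6$ contributes the factor $1/32$.
\item For $K_n$ we have $c_k=(n-1)^k+(n-1)$. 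Expanding gives a polynomial in $n$, and I expect it to factor as $n(n-1)(n^4+10n^3-5n^2-70n+80)$; this is a routine but slightly tedious expansion. I would cross-check it against \Cref{prop:SpecialNormsRademacher}(a) with $d=6$, where $\sum_k\binom{n-1}{k}k^6$ is expressed via Stirling numbers $S(6,j)$.
\end{itemize}

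With these constants, the parts follow by reading off the links of \eqref{eq - random}.
\begin{itemize}
\item Part (d) and the lower bound in (a) come from the first two links, $\beta\le \norm{G}/\norm{M_n}\le\lambda_1$; after the sixth power the factors $(n-1)$ cancel.
\item The upper bound in (a) is the third link, $\lambda_1/(n-1)\le \norm{G}/\norm{K_n}$, which gives $\lambda_1^6\le (n-1)^6\norm{G}^6/\norm{K_n}^6$. One factor $n-1$ cancels against the factorization above, leaving the stated $(n-1)^5$ over $n(\cdots)$.
\item Part (b) is the fourth and fifth links: $\norm{G}/\norm{K_n}\le|\lambda_n|$ and $|\lambda_n|\le\sqrt{rs}\,\norm{G}/\norm{K_{r,s}}$.
\item Part (c) is the last link, $\sqrt{rs}\,\norm{G}/\norm{K_{r,s}}\le \|G\|_*/2$. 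Sixth powers give $\|G\|_*^6\ge 64\cdot(\text{numerator})/32$, which is exactly (c).
\end{itemize}
All these bounds are valid because \Cref{prop - random} holds for any nonconstant $X$ with finite moments, and the Rademacher variable qualifies.

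For bipartite $G$ we use the reduced chain in \Cref{prop - random}: $\|G\|_*/(2r)\le \norm{G}/\norm{M_n}\le\lambda_1\le\sqrt{rs}\,\norm{G}/\norm{K_{r,s}}\le\|G\|_*/2$. Taking sixth powers and rearranging each link with the constants above gives the displayed chain, including its last inequality, which comes from the first link.

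The main obstacle is the polynomial identity for $\norm{K_n}_{X,6}^6$. Concretely, one must verify that $15c_2^3-30c_2c_4+16c_6$ with $c_k=(n-1)^k+(n-1)$ equals $n(n-1)(n^4+10n^3-5n^2-70n+80)$. I would do this by substituting $x=n-1$, expanding in $x$, and comparing coefficients. A quick sanity check at $n=2$ should give $32$, matching $K_2=K_{1,1}$.
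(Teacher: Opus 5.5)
Your proposal is correct and follows the paper's intended route: substitute the Rademacher norm $\|G\|_{X,6}^6=(15c_2^3-30c_2c_4+16c_6)/6!$ into \Cref{prop - random} and divide by the normalizing constants, which your closed-walk computation gives in agreement with \Cref{prop:SpecialNormsRademacher}; taking sixth powers is legitimate because every quantity in the chain, including $\beta$, is nonnegative. The factorization you flagged does hold: with $x=n-1$ the numerator for $K_n$ is $x^6+15x^5+45x^4-15x^3-30x^2+16x$, which equals $n(n-1)(n^4+10n^3-5n^2-70n+80)$.
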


\section{Other distributions}\label{sec - other}

In the previous sections, we found the best functions to use in \Cref{thm - main} to obtain the sharpest inequalities possible in the case of the random vector norms and their closure whenever $d=2$ or $4$. Moreover, we presented evidence to suggest that the Rademacher distribution and the Schatten $d$-norms are often the best possible for $d\geq 6$. However, due to specific structures that arise in other distributions, it may be possible to obtain sharper bounds even when $d=4$.

In the following, we consider two examples: the standard exponential distribution and the uniform distribution on $[-1,1]$. In the first case, the structure of the expansion in terms of closed walks $c_k$ yields a sharper lower bound for $c_d$. In the second, an inequality of Lata\l a and Oleszkiewicz, specific to this setting, gives a sharp lower bound for $c_6$ whenever $G$ has no triangles or no squares and more than 9 edges.

%In the following, we consider two such examples, namely the standard exponential distribution and the uniform distribution on $[-1,1]$. In the first case, the special structure of the expansion in terms of the number of closed walks $c_k$ yields a better and convenient lower bound for $c_d$. In the second, an inequality of Lata\l a and Oleszkiewicz, specific to this setting, allows us to obtain a sharp lower bound for $c_6$ whenever $G$ has no triangles or no squares and more than 9 edges.

\subsection{Standard exponential random variables}

Suppose that $X$ denotes a standard exponential random variable, which is the random variable with PDF $f_X(x)=e^{-x}$ for $x\geq 0$. The moment generating function of $X$ is given by $M_X(t)=(1-t)^{-1}$ since
\begin{align*}
	\mathbb{E}\bigl[e^{tX}\bigr]=\int_0^{\infty} e^{tx} f_X(x)\,\mathrm{d}x =\int_0^{\infty} e^{-x(1-t)}\,\mathrm{d}x =\frac{1}{1-t}
\end{align*} for $t<1$. Therefore, if $d\geq 2$ is even and $G$ has order $n$, then \eqref{thm:Moments} ensures that
\begin{align}
	\norm{G}_{X,d}^d=[t^d]\prod_{k=1}^n \frac{1}{1-\lambda_kt}.\label{eq:CHSnorms}
\end{align} 
Recall that the \emph{complete homogeneous symmetric} (CHS) polynomial of degree $d$ in the variables $x_1, \ldots, x_n$ is defined as the sum
\begin{align*}
	h_d(x_1, x_2, \ldots, x_n)=\sum_{1\leq i_1\leq i_2\leq\cdots \leq i_d\leq n} x_{i_1}x_{i_2}\cdots x_{i_d}
\end{align*}
of all degree-$d$ monomials in $x_1,  \ldots, x_n$. The CHS polynomials are known to admit the following generating function \cite[Equation 7.11]{Stanley2}:
\begin{align}\label{eq:CHS}
	\sum_{d=0}^{\infty}h_d(x_1, x_2, \ldots, x_n)t^d=\prod_{i=1}^n \frac{1}{1-x_it}.
\end{align} 
Relations \eqref{eq:CHSnorms} and \eqref{eq:CHS} imply that $\norm{G}_{X,d}^d = h_d\bigl(\boldsymbol{\lambda}(G)\bigr)$. Hence, we refer to the norms generated by the standard exponential distribution as \emph{CHS norms}. Hunter was the first to show the positive definiteness of the even degree CHS polynomials on $\mathbb{R}^n$ \cite{Hunter}. Several other proofs have since been put forth \cite{Barvinok, Baston, Bottcher, GarciaOmar, Roventa, Tao}.

Since $X$ has cumulants $\kappa_j=(j-1)!$, the identity \eqref{thm:Cumulants} yields, for even $d\ge 2$,
\begin{align}
	\norm{G}_{X,d}^d
	= h_d(\lambda_1,\lambda_2,\dots,\lambda_n)
	= \sum_{\boldsymbol{\pi}\vdash d} z_{\boldsymbol{\pi}}^{-1} c_{\boldsymbol{\pi}}, \label{eq:Key}
\end{align}
in which $z_{\bm \pi} = \prod_{i\geq 1} i^{m_i} m_i!$. It is a standard fact that $z_{\boldsymbol{\pi}}$ is the size of the centralizer of a permutation in the symmetric group of conjugacy class $\boldsymbol{\pi}$ \cite[Proposition~7.7.3]{Stanley2}. In particular, if $G$ has $m$ edges and $t$ triangles, then
\[
\norm{G}_{X,6}^6=\frac16 c_6+\frac14 m c_4+2t^2+\frac16 m^3.
\]

\medskip
The CHS norms of $K_n$, $K_{r,s}$ and $M_n$ have simple expressions. We leave the evaluation of $\norm{P_n}_{X,d}$ as an open problem (see \Cref{rem:Path}). The following proposition appears in \cite{ChavezExtremal} and follows from \eqref{eq:CHSnorms}.

\begin{proposition}\label{prop:SpecialNormsExp}
	Suppose $d\geq 2$ is even. The CHS norms satisfy the following identities:
	\begin{enumerate}
		\item[(a)] $\norm{K_n}_{X,d}^d=\displaystyle \sum_{k=0}^d (-1)^k(n-1)^{d-k}{k+n-2\choose n-2},$
		\item[(b)] $\displaystyle\norm{M_n}_{X,d}^d= \binom{\lfloor n/2 \rfloor + d/2-1}{d/2}$,
		\item[(c)] $\norm{K_{r,s}}_{X,d}=\sqrt{rs}\,\norm{(1,0, \ldots, 0, -1)}_{X,d}=\sqrt{rs}$.
	\end{enumerate}
\end{proposition}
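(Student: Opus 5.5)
We prove each of the three identities directly from \eqref{eq:CHSnorms}, namely $\norm{G}_{X,d}^d=[t^d]\prod_{k}(1-\lambda_k t)^{-1}$, using the spectra listed in \Cref{prop:Spectra}. No majorization is needed; the only work is extracting a Taylor coefficient from a product of geometric series. In each case we insert the spectrum, simplify the product into a form whose coefficients are visible, and read off $[t^d]$, using that $d$ is even.

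For (a), the spectrum of $K_n$ gives
\[
\norm{K_n}_{X,d}^d=[t^d]\,\frac{1}{1-(n-1)t}\cdot\frac{1}{(1+t)^{n-1}}.
\]
We expand the two factors separately:
\[
\frac{1}{1-(n-1)t}=\sum_{j\ge0}(n-1)^j t^j,
\qquad
(1+t)^{-(n-1)}=\sum_{k\ge0}(-1)^k\binom{k+n-2}{n-2}t^k.
\]
The second expansion is the negative binomial series, since $\binom{-(n-1)}{k}=(-1)^k\binom{k+n-2}{k}$. Taking the Cauchy product and collecting the $t^d$ term, with $j=d-k$, gives exactly $\sum_{k=0}^d(-1)^k(n-1)^{d-k}\binom{k+n-2}{n-2}$.

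For (b), write $r=\lfloor n/2\rfloor$. The spectrum of $M_n$ consists of $r$ ones and $r$ minus ones, plus a zero when $n$ is odd; a zero eigenvalue contributes a factor $1$. The product therefore collapses to
\[
\bigl((1-t)(1+t)\bigr)^{-r}=(1-t^2)^{-r}=\sum_{j\ge0}\binom{r+j-1}{j}t^{2j}.
\]
Since $d$ is even, the coefficient of $t^d$ is the $j=d/2$ term, $\binom{r+d/2-1}{d/2}$.

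For (c), the spectrum of $K_{r,s}$ consists of $\pm\sqrt{rs}$ together with zeros, so the eigenvalue vector is $\sqrt{rs}\,(1,0,\dots,0,-1)$. Because $\norm{\cdot}_{X,d}$ is a norm, and hence homogeneous, this gives the first equality in (c). For the second, the generating function is $(1-t^2)^{-1}=\sum_j t^{2j}$, whose $t^d$ coefficient is $1$ for even $d$. Hence $h_d(1,0,\dots,0,-1)=1$, and taking the $d$th root gives the value $1$.

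None of these steps is a genuine obstacle. The only point needing care is the sign and index bookkeeping in the negative binomial expansion in (a), which we double-check on a small case such as $n=2$. Then $K_2=M_2$ and the formula should give $1$: with $n=2$ the sum is $\sum_{k=0}^d(-1)^k=1$ for even $d$, which agrees with (b).
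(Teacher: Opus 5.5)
Your proof is correct and takes the same route as the paper: the paper gives no separate argument, citing \cite{ChavezExtremal} and noting that the identities follow from \eqref{eq:CHSnorms}. That is exactly the coefficient extraction you carry out, namely the negative binomial expansion of $(1+t)^{-(n-1)}$ for $K_n$ and the collapse to $(1-t^2)^{-\lfloor n/2\rfloor}$ and $(1-t^2)^{-1}$ for $M_n$ and $K_{r,s}$, and your index bookkeeping (including the $n=2$ check) is sound.
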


Interestingly, CHS norms exhibit the same extremal behavior as the spectral norm. Indeed, if $G$ is a connected graph of order $n$, then
\begin{equation}
	2\cos\bigl( \tfrac{\pi}{n+1}\bigr)=\norm{P_n} \le \norm{G} \le \norm{K_n}=n-1 .
	\label{eq:Graph}
\end{equation}
If $T$ is a tree of order $n$, then the sharper bound
\begin{equation}
	2\cos\bigl( \tfrac{\pi}{n+1}\bigr)=\norm{P_n} \le \norm{T} \le \norm{S_n}=\sqrt{n-1}
	\label{eq:Tree}
\end{equation}
holds. Likewise, if $B$ is a connected bipartite graph of order $n$, then
\begin{equation}
	2\cos\bigl( \tfrac{\pi}{n+1}\bigr)=\norm{P_n} \le \norm{B} \le \norm{K_{\lfloor n/2\rfloor,\lceil n/2\rceil}}
	= \sqrt{\big\lfloor \tfrac{n^{\smash{2}}}{4}\big\rfloor } .
	\label{eq:Bipartite}
\end{equation}
The bounds in \eqref{eq:Graph} and \eqref{eq:Tree} are due to Collatz and Sinogowitz \cite{Collatz}, and the upper bound in \eqref{eq:Bipartite} to Bhattacharya, Friedland, and Peled \cite{Bhattacharya}. It is shown in \cite{ChavezExtremal} that CHS norms satisfy the same bounds for trees and connected graphs. We may complete the picture by proving the analogue for connected bipartite graphs.

\begin{theorem}\label{thm:MainTheorem2}
	If $d\geq 2$ is even and $B$ is a connected bipartite graph of order $n$, then 
	\begin{align*}
		\norm{P_n}_{X,d}\leq \|B\|_{X,d} \leq \|K_{\lfloor \frac{n}{2} \rfloor, \lceil \frac{n}{2} \rceil}\|_{X,d}
	\end{align*}
\end{theorem}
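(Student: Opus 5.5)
The plan is to handle the two inequalities separately. The lower bound should come almost for free. The upper bound should follow from rewriting the CHS norm of a bipartite graph as a complete homogeneous symmetric polynomial in the squared eigenvalues, which are nonnegative.

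\emph{Lower bound.} A connected bipartite graph is in particular a connected graph of order $n$. As recalled just before the statement, it is shown in \cite{ChavezExtremal} that CHS norms obey the Collatz--Sinogowitz-type bound $\norm{P_n}_{X,d}\le\norm{G}_{X,d}$ for every connected graph $G$ of order $n$. Applying this with $G=B$ gives the left inequality directly.

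\emph{Upper bound, step 1: reduce to squared eigenvalues.} Since $B$ is bipartite, its spectrum is symmetric. We can write it as $\{\pm\mu_1,\dots,\pm\mu_p\}$ together with $n-2p$ zeros, where $\mu_i\ge 0$. Each pair $\pm\mu_i$ contributes $(1-\mu_i t)^{-1}(1+\mu_i t)^{-1}=(1-\mu_i^2t^2)^{-1}$ to the generating function in \eqref{eq:CHSnorms}, and each zero contributes $1$. Comparing with \eqref{eq:CHS} in the variable $t^2$, I expect
\[
\norm{B}_{X,d}^d=h_d\bigl(\bm\lambda(B)\bigr)=h_{d/2}\bigl(\mu_1^2,\dots,\mu_p^2\bigr).
\]
The same computation applied to $K_{r,s}$ gives $h_{d/2}(rs)=(rs)^{d/2}$, which agrees with \Cref{prop:SpecialNormsExp}(c).

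\emph{Upper bound, step 2: compare with a power of the sum.} Set $x_i=\mu_i^2\ge 0$. Then $\sum_i x_i=c_2/2=m$, where $m$ is the number of edges of $B$. Every monomial of degree $k=d/2$ in the $x_i$ appears with coefficient $1$ in $h_k(x)$. The same monomial appears in $(x_1+\cdots+x_p)^k$ with a multinomial coefficient, which is at least $1$. Because all $x_i$ are nonnegative, this yields $h_{k}(x)\le (\sum_i x_i)^{k}=m^{d/2}$. An alternative route is to use Schur-convexity of $h_k$ on the nonnegative orthant together with $x\prec(\sum_i x_i,0,\dots,0)$.

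\emph{Upper bound, step 3: count edges.} A bipartite graph with parts of sizes $a+b=n$ has at most $ab\le\lfloor n^2/4\rfloor=\lfloor n/2\rfloor\lceil n/2\rceil$ edges. Hence
\[
\norm{B}_{X,d}\le\sqrt m\le\sqrt{\lfloor n/2\rfloor\lceil n/2\rceil}=\norm{K_{\lfloor n/2\rfloor,\lceil n/2\rceil}}_{X,d},
\]
where the last equality is \Cref{prop:SpecialNormsExp}(c).

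The upper bound is therefore elementary, and connectivity is not needed for it. The main point to check is the identity in step 1: the pairing of $\pm\mu_i$ must be exact and the zero eigenvalues must be handled correctly. Everything else is routine. The only genuinely nontrivial ingredient is the lower bound, and that is imported from \cite{ChavezExtremal}. If that citation were unavailable, I would instead attempt an argument for the lower bound by edge deletion down to a spanning tree, using monotonicity of $h_{d/2}$ in the $x_i$ for bipartite subgraphs.
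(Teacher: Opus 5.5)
Your proof is correct. The lower bound is essentially the paper's: both rest on \cite{ChavezExtremal}. The paper quotes the tree version $\norm{P_n}_{X,d}\le\norm{T}_{X,d}$ and passes implicitly to $B$ through a spanning tree, while you quote the connected-graph version recalled just before the theorem.

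The upper bound is where you take a genuinely different route. The paper uses the closed-walk expansion \eqref{eq:Key}. There the coefficients $z_{\boldsymbol{\pi}}^{-1}$ are positive, and $B$ is a subgraph of $K_{r,s}$, so $c_k(B)\le c_k(K_{r,s})$. Hence $\norm{B}_{X,d}\le\norm{K_{r,s}}_{X,d}=\sqrt{rs}\le\sqrt{\lfloor n/2\rfloor\lceil n/2\rceil}$. You instead use the $\pm$ symmetry of the bipartite spectrum to write $\norm{B}_{X,d}^d=h_{d/2}(\mu_1^2,\dots,\mu_p^2)$. You then compare monomials to get $\norm{B}_{X,d}\le\sqrt m$, and finish with $m\le\lfloor n^2/4\rfloor$.

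What your route buys is a sharper intermediate bound. Since $m\le rs$, your $\sqrt m$ improves the paper's $\sqrt{rs}$. Your bound holds for every bipartite graph, connected or not. Combined with $\lambda_1\le\norm{B}_{X,d}$ from \Cref{prop - random} and \Cref{prop:SpecialNormsExp}, it gives $\lambda_1\le\norm{B}_{X,d}\le\sqrt m$, a CHS-norm refinement of $\lambda_1\le\sqrt m$. It also never needs the cumulant expansion.

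What the paper's route buys is uniformity. One monotonicity principle, positive coefficients together with monotonicity of closed-walk counts under taking subgraphs, handles both the upper bound and the passage from spanning trees to connected graphs in the lower bound. It also applies to any random vector norm whose expansion \eqref{thm:Cumulants} has nonnegative coefficients, not only the CHS norms.

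One caution about your fallback for the lower bound. The squared eigenvalues are \emph{not} individually monotone under edge deletion. Deleting an edge of $C_4=K_{2,2}$ gives $P_4$, and $(x_1,x_2)$ moves from $(4,0)$ to $\bigl(\tfrac{3+\sqrt5}{2},\tfrac{3-\sqrt5}{2}\bigr)$, so $x_2$ increases. Monotonicity of $h_{d/2}$ in each $x_i$ therefore cannot justify $\norm{T}_{X,d}\le\norm{B}_{X,d}$. The correct mechanism is the paper's closed-walk monotonicity in \eqref{eq:Key}. Even with it, you would still need the tree result from \cite{ChavezExtremal}.
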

\begin{proof}
	The coefficients $z_{\boldsymbol{\pi}}^{-1}$ are positive. \Cref{eq:Key} therefore turns the problem of finding extremal graphs for CHS norms into finding extremal graphs for $c_k$. If $B$ is a bipartite graph with parts of size $r$ and $s$, then $B$ can be embedded inside $K_{r,s}$ as a subgraph, which implies $c_k(B) \leq c_k(K_{r,s})$. Therefore, the relation \eqref{eq:Key} implies the CHS $d$-norm of a connected bipartite graph $B$ is maximized by the complete bipartite graph $K_{r,n-r}$ for some value of $r$. \Cref{prop:SpecialNormsExp} ensures that $\|K_{r,n-r}\|_d = \sqrt{r(n-r)}$, which is maximized when $r=\lfloor\frac{n}{2} \rfloor$. This establishes our upper bound. The lower bound follows from the fact that $\norm{P_n}_{X,d}\leq \norm{T}_{X,d}$ for all trees $T$ of order $n$, which is established in \cite{ChavezExtremal}.
\end{proof}

\begin{remark}\label{rem:Path}
	While we have not been able to prove this fact, we have found that the following appears to hold for all $n\geq 1$ and all even $d\geq 2$:
	\begin{align*}
		\|P_n\|_{X,d}^d &= \frac{2^{n+d}}{n+1}\sum_{k=1}^{n} (-1)^{k+1} \cos^{n-1+d}\!\left(\frac{\pi k}{n+1}\right)\! \sin^{2}\!\left(\frac{\pi k}{n+1}\right) \\
		&= \sum_{j=-\infty}^{\infty}
		\left(\binom{n+d-1}{(n+1)j+\frac{d}{2}}-\binom{n+d-1}{(n+1)j+\frac{d}{2}-1}\right).
	\end{align*}
	The second expression reduces to a finite sum since almost all of the summands vanish. Indeed, the bounds can be replaced by $-\lfloor \frac{d-2}{2\left(n+1\right)} \rfloor$ and $\smash{1+\lfloor\frac{d}{2\left(n+1\right)}\rfloor}$. In particular, if $n+1 \geq \smash{\frac{d}{2}}$, the sum becomes
	\begin{align*}
		\|P_n\|_{X,d}^d = \frac{n^{2}+n-d}{(n+d)(n+d+1)} \binom{n+d+1}{d/2}.
	\end{align*}
\end{remark}

Because of the particular structure of the CHS norms, we are able to establish a sharp lower bound for the number of closed walks of even length in any bipartite graph which often improves on the bound $c_d \geq \frac{\lambda_1^d}{1+n^{1-d}}$ obtained in the case of the Schatten norms. We begin with a computational lemma about the sum of reciprocals of centralizer sizes.

\begin{lemma}\label{lem:EvenParts}
	Let $d\geq 2$ be even. Then, for all real numbers $\alpha$,
	\begin{align*}
		\sum_{\boldsymbol{\pi}\vdash \frac{d}{2}} z_{2\boldsymbol{\pi}}^{-1}\alpha^{\ell(2\boldsymbol{\pi})}=\frac{1}{(\frac{d}{2})!} \Big( \frac{\alpha}{2} \Big)^{\! (\frac{d}{2})},
	\end{align*} in which $(x)^{(k)}=x(x+1)\cdots (x+k-1)$ denotes the rising factorial and $\ell(\bm{\pi})$ is the number of elements in the partition $\bm{\pi} \vdash d$.
\end{lemma}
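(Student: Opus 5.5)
The plan is to read the left-hand side as a coefficient of a generating function, using the exponential formula for cycle-index sums. The standard identity (the cycle index of the symmetric groups) is
\[
\sum_{k\geq 0} t^k \sum_{\boldsymbol{\pi}\vdash k} z_{\boldsymbol{\pi}}^{-1} \prod_{i} p_i^{m_i(\boldsymbol{\pi})} = \exp\Bigl(\sum_{i\geq 1} \frac{p_i t^i}{i}\Bigr).
\]
We will specialize it by setting $p_i=\alpha$ for even $i$ and $p_i=0$ for odd $i$. Then only partitions with all parts even survive, and these are exactly the partitions $2\boldsymbol{\pi}$ with $\boldsymbol{\pi}\vdash k/2$. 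Each such term contributes $z_{2\boldsymbol{\pi}}^{-1}\alpha^{\ell(2\boldsymbol{\pi})}$, so the left-hand side of the lemma is the coefficient $[t^d]$ of
\[
\exp\Bigl(\alpha\sum_{j\geq 1}\frac{t^{2j}}{2j}\Bigr) = \exp\Bigl(-\tfrac{\alpha}{2}\log(1-t^2)\Bigr) = (1-t^2)^{-\alpha/2}.
\]

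The generalized binomial series gives $(1-u)^{-\beta}=\sum_{k\geq 0} \frac{(\beta)^{(k)}}{k!}u^k$. Taking $u=t^2$, $\beta=\alpha/2$ and $k=d/2$, the coefficient of $t^d$ is $\frac{1}{(d/2)!}\bigl(\frac{\alpha}{2}\bigr)^{(d/2)}$, which is the claimed formula.

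The only step that needs care is justifying the exponential formula as an identity of formal power series in $t$, with $\alpha$ an arbitrary real number. One route is to cite it via \cite[Proposition~7.7.3]{Stanley2} and the surrounding discussion. The other is to derive it directly: expand $\exp$ of the sum as a product over $i$ of $\sum_{m_i\ge0} \frac{(p_i/i)^{m_i}t^{im_i}}{m_i!}$, which by the definition of $z_{\boldsymbol{\pi}}$ yields exactly $z_{\boldsymbol{\pi}}^{-1}$ for each multiplicity vector. This direct expansion is routine, and I expect no real obstacle beyond the bookkeeping of $m_i(2\boldsymbol{\pi})=m_{i/2}(\boldsymbol{\pi})$.

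As a sanity check, both sides are polynomials in $\alpha$, so it suffices to verify the identity for $d=2$, where $z_{(2)}^{-1}\alpha=\alpha/2$ agrees with the right-hand side.
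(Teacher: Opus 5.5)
Your argument is correct and is essentially the paper's: both read the sum off the cycle-index/exponential generating function combined with the binomial series. The only difference is bookkeeping: the paper first uses $z_{2\boldsymbol{\pi}}=2^{\ell(\boldsymbol{\pi})}z_{\boldsymbol{\pi}}$ to reduce to partitions of $d/2$ and extracts $[t^{d/2}](1-t)^{-\alpha/2}$, whereas you zero out the odd power sums and extract $[t^d](1-t^2)^{-\alpha/2}$ directly. (Your closing remark that polynomiality in $\alpha$ means it ``suffices to verify $d=2$'' is a non sequitur, but nothing depends on it, since the generating-function argument already proves the identity for every even $d$.)
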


\begin{proof}
	Since $z_{2\boldsymbol{\pi}}=z_{\boldsymbol{\pi}}2^{\ell(\bm{\pi})}$ and $\ell(2\boldsymbol{\pi})=\ell(\boldsymbol{\pi})$, it follows that
	\[
	\sum_{\boldsymbol{\pi}\vdash \frac{d}{2}} z_{2\boldsymbol{\pi}}^{-1}\alpha^{\ell(2\boldsymbol{\pi})} = \sum_{\boldsymbol{\pi}\vdash \frac{d}{2}} z_{\boldsymbol{\pi}}^{-1} \Big(\frac{\alpha}{2}\Big)^{\!\ell(\boldsymbol{\pi})}
	\]
	The proof of \cite[Prop.~2.2 (b)]{StanleyJack} then ensures that, for all real numbers $\beta$
	\begin{align*}
		\sum_{j=0}^{\infty}\Big(\sum_{\boldsymbol{\pi}\vdash j} z_{\boldsymbol{\pi}}^{-1}\beta^{\ell(\boldsymbol{\pi})}p_{\bm{\pi}}\Big) t^j=\prod_{i=1}^n \frac{1}{ (1-x_it)^{\beta}}.
	\end{align*} 
	where $p_{\bm{\pi}} = x_{\pi_1}^{m_1} \cdots x_{\pi_\ell}^{m_\ell}$. Set $n=1$, $x_1=1$ and $\beta=\frac{\alpha}{2}$ above to obtain the generating function
	\begin{align*}
		\sum_{j=0}^{\infty}\Big(\sum_{\boldsymbol{\pi}\vdash j} z_{\boldsymbol{\pi}}^{-1}\alpha^{\ell(\boldsymbol{\pi})}\Big) t^j&=\Big( \frac{1}{1-t}\Big)^{\!\frac{\alpha}{2}}=\sum_{j=0}^{\infty} \frac{1}{j!} \Big( \frac{\alpha}{2}\Big)^{\!(j)} t^j.
	\end{align*} 
	Simply compare the coefficients of $t^{d/2}$ above to conclude the proof.
\end{proof}

\begin{theorem}\label{thm:MainTheorem5}
	Suppose $d\geq 2$ is even and $B$ is a bipartite graph of order $n$ with $m$ edges and spectral radius $\lambda_1$. The number of closed walks in $B$ of length $d$ satisfies the following bound, which is attained by any complete bipartite graph:
	\begin{align*}
		c_d\geq \Bigg(\frac{2m}{\lambda_1^2} +d- \frac{d}{(\frac{d}{2})!} \Big( \frac{m}{\lambda_1^2} \Big)^{\! (\frac{d}{2})} \Bigg)\lambda_1^d.
	\end{align*}
\end{theorem}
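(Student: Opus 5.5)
The plan is to combine the bipartite case of \Cref{thm - main}, applied to the CHS norm, with the cumulant expansion \eqref{eq:Key} and \Cref{lem:EvenParts}. Throughout let $X$ be standard exponential, so that $\norm{B}_{X,d}^d=h_d(\bm{\lambda}(B))$.

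\textbf{Step 1: a lower bound on the CHS norm.} By \Cref{prop:SpecialNormsExp}(c), $\norm{K_{r,s}}_{X,d}=\sqrt{rs}$. Substituting this into the bipartite chain of \Cref{thm - main} (equivalently \Cref{prop - random}) gives
\[
\lambda_1\le \frac{\sqrt{rs}\,\norm{B}_{X,d}}{\norm{K_{r,s}}_{X,d}}=\norm{B}_{X,d},
\qquad\text{that is,}\qquad h_d(\bm{\lambda})\ge \lambda_1^d .
\]

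\textbf{Step 2: expand $h_d(\bm\lambda)$ over even partitions.} Since $B$ is bipartite, every odd closed-walk count vanishes, $c_{2j+1}=0$. Hence in \eqref{eq:Key} only partitions with all parts even survive, and these are exactly the partitions $2\bm{\pi}$ with $\bm{\pi}\vdash \frac d2$. We isolate the term $\bm{\pi}=(\frac d2)$; since $z_{(d)}=d$, it contributes $c_d/d$. This gives
\[
h_d(\bm\lambda)=\frac{c_d}{d}+\sum_{\bm\pi\vdash \frac d2,\ \bm\pi\neq(\frac d2)} z_{2\bm\pi}^{-1}\,c_{2\bm\pi}.
\]

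\textbf{Step 3: bound the remaining terms.} Because $|\lambda_i|\le\lambda_1$, we have $\lambda_i^{2k}\le \lambda_1^{2k-2}\lambda_i^2$ for every $i$. Summing over $i$ gives $c_{2k}\le \lambda_1^{2k-2}c_2=2m\,\lambda_1^{2k-2}$. Taking products over the parts of $2\bm\pi$ yields
\[
c_{2\bm\pi}\le (2m)^{\ell(\bm\pi)}\lambda_1^{\,d-2\ell(\bm\pi)}.
\]
All coefficients $z_{2\bm\pi}^{-1}$ are positive, so we may insert this bound term by term. We then add and subtract the $\bm\pi=(\frac d2)$ term, which is $\frac{2m}{d}\lambda_1^{d-2}$. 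This gives
\[
h_d(\bm\lambda)\le \frac{c_d}{d}-\frac{2m}{d}\lambda_1^{d-2}+\lambda_1^d\sum_{\bm\pi\vdash\frac d2} z_{2\bm\pi}^{-1}\Bigl(\frac{2m}{\lambda_1^2}\Bigr)^{\ell(2\bm\pi)}.
\]
Applying \Cref{lem:EvenParts} with $\alpha=2m/\lambda_1^2$, the last sum equals $\frac{1}{(d/2)!}\bigl(\frac{m}{\lambda_1^2}\bigr)^{(\frac d2)}$.

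\textbf{Step 4: combine.} Join this upper bound with $h_d\ge\lambda_1^d$ from Step 1, multiply by $d$, and rearrange. This gives exactly the stated inequality
\[
c_d\ge \Bigl(\frac{2m}{\lambda_1^2}+d-\frac{d}{(d/2)!}\Bigl(\frac{m}{\lambda_1^2}\Bigr)^{(\frac d2)}\Bigr)\lambda_1^d .
\]
We assume $m\ge1$ so that $\lambda_1>0$.

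\textbf{Sharpness.} For $K_{r,s}$ the spectrum is $\pm\sqrt{rs}$ together with zeros, and $m=rs=\lambda_1^2$. Then $c_{2k}=2(rs)^k=2m\lambda_1^{2k-2}$, so every bound in Step 3 is an equality. Also $h_d=\lambda_1^d$ by \Cref{prop:SpecialNormsExp}(c), so Step 1 is an equality as well. Hence equality holds throughout.

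\textbf{Main obstacle.} The delicate point is Step 3. We must use $c_{2k}\le c_2\lambda_1^{2k-2}$, rather than a cruder estimate, so that the resulting sum has exactly the form required by \Cref{lem:EvenParts}. We must also account correctly for the single-part term $\bm\pi=(\frac d2)$, which is removed from the sum and then added back.
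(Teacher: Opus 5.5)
Your proposal is correct and follows essentially the same route as the paper. The paper also obtains $\lambda_1^d\le \|B\|_{X,d}^d$ from the bipartite chain with $\|K_{r,s}\|_{X,d}=\sqrt{rs}$, expands $h_d$ over even partitions while isolating the term $c_d/d$, inserts $c_{2k}\le 2m\lambda_1^{2k-2}$ term by term together with \Cref{lem:EvenParts} at $\alpha=2m/\lambda_1^2$, and then solves for $c_d$; your explicit assumption $m\ge 1$ (so that $\lambda_1>0$) is a harmless clarification that the paper leaves implicit.
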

\begin{proof}
	Observe $c_k\leq \lambda_1^{k-2}(\lambda_1^2+\lambda_2^2+\cdots +\lambda_n^2)=2m \lambda_1^{k-2}$. Equality holds when $k\geq 2$ is even and $G$ is a complete bipartite graph. Therefore, we have
	\begin{align}
		c_{\pi_1}c_{\pi_2}\cdots c_{\pi_{\ell}}\leq 2^{\ell}m^{\ell} \lambda_1^{\pi_1+\pi_2+\cdots +\pi_{\ell}}\lambda_1^{-2\ell}=\Big( \frac{2m}{\lambda_1^2}\Big)^{\!\ell} \lambda_1^d\label{eq:EvenLength1}
	\end{align} 
	for any partition $\boldsymbol{\pi}=(\pi_1, \pi_2, \ldots, \pi_{\ell})$. \Cref{lem:EvenParts} with $\alpha= 2m/\lambda_1^2$ then implies
	\begin{equation}
		z_{(d)}^{-1}\Big( \frac{2m}{\lambda_1^2}\Big) + \sum_{\substack{\boldsymbol{\pi}\vdash d/2 \\ \boldsymbol{\pi}\neq (d/2)}} z_{2\boldsymbol{\pi}}^{-1} \Big( \frac{2m}{\lambda_1^2}\Big)^{\!\ell(2\boldsymbol{\pi})} = \frac{1}{(\frac{d}{2})!} \Big( \frac{m}{\lambda_1^2} \Big)^{\! (\frac{d}{2})}.\vspace{-4pt}\label{eq:EvenLength2}
	\end{equation} 
	Relations \eqref{eq:EvenLength1} and \eqref{eq:EvenLength2} imply
	\begin{align}
		\sum_{\substack{\boldsymbol{\pi}\vdash d/2 \\ \boldsymbol{\pi}\neq (d/2)}} z_{2\boldsymbol{\pi}}^{-1} c_{2\boldsymbol{\pi}} \leq \lambda_1^d \sum_{\substack{\boldsymbol{\pi}\vdash d/2 \\ \boldsymbol{\pi}\neq (d/2)}} z_{2\boldsymbol{\pi}}^{-1} \Big( \frac{2m}{\lambda_1^2}\Big)^{\!\ell(2\boldsymbol{\pi})} = \lambda_1^d \Bigg( \frac{1}{(\frac{d}{2})!} \Big( \frac{m}{\lambda_1^2} \Big)^{\! (\frac{d}{2})}-z_{(d)}^{-1}\Big( \frac{2m}{\lambda_1^2}\Big) \Bigg). \label{eq:EvenLength3}
	\end{align} 
	\Cref{prop - random,prop:SpecialNormsExp} imply $\lambda_1\leq \norm{B}_{X,d}$, with equality for any complete bipartite graph. In particular, 
	\begin{align}
		\lambda_1^d\leq \sum_{\boldsymbol{\pi}\vdash d/2} z_{2\boldsymbol{\pi}}^{-1} c_{2\boldsymbol{\pi}} = z_{(d)}^{-1}c_d+\sum_{\substack{\boldsymbol{\pi}\vdash d/2 \\ \boldsymbol{\pi}\neq (d/2)}} z_{2\boldsymbol{\pi}}^{-1} c_{2\boldsymbol{\pi}}.\label{eq:EvenLength4}
	\end{align} 
	Note that the summation for $\norm{B}_{X,d}$ appearing in equation \eqref{eq:Key} is taken over all partitions of $d$ with even parts since bipartite graphs contain no closed walks of odd length. Hence, combining \eqref{eq:EvenLength3} and \eqref{eq:EvenLength4} yield
	\begin{align*}
		\lambda_1^d \leq z_{(d)}^{-1} c_d+\lambda_1^d \Bigg( \frac{1}{(\frac{d}{2})!} \Big( \frac{m}{\lambda_1^2} \Big)^{\! (\frac{d}{2})}-z_{(d)}^{-1}\Big( \frac{2m}{\lambda_1^2}\Big) \Bigg).
	\end{align*} The claim follows after observing $z_{(d)}=d$ and then solving for $c_d$ above.
\end{proof}

%The bound obtained in \Cref{thm:MainTheorem5} is better than the earlier bound of $c_d \geq \frac{\lambda_1^d}{1+n^{1-d}}$ whenever $n\leq 2\bigl((d/2)! (d-1)/d\bigr)^{2/d} \approx \frac{d+\ln(d)+\ln(\pi)}{e}$. Indeed, \Cref{cor - Schatten} ensures that, for bipartite graphs, $\frac{m}{\lambda_1^2} \leq \lfloor \frac{n}{2} \rfloor \leq \frac{n}{2}$. Consequently, if $n\leq 2\bigl((d/2)! (d-1)/d\bigr)^{2/d} $, then
%\begin{align*}
%\frac{2m}{\lambda_1^2} +d- \frac{d}{(\frac{d}{2})!} \Big( \frac{m}{\lambda_1^2} \Big)^{\! (\frac{d}{2})} &\geq %d- \frac{d}{(\frac{d}{2})!} \Big( \frac{m}{\lambda_1^2} \Big)^{\! \frac{d}{2}} \geq 
%d- \frac{d}{(\frac{d}{2})!} \Big( \frac{n}{2} \Big)^{\! \frac{d}{2}} \geq d- \frac{d}{(\frac{d}{2})!} \,\frac{(\frac{d}{2})!(d-1)}{d} =1 \geq \frac{1}{1+n^{1-d}},
%\end{align*}
%where we used the fact that $x^{(d)} \geq x^d$. Hence, \Cref{thm:MainTheorem5} is an improvement over the previously presented bound in this particular case.

\subsection{Uniform random variables}

Now suppose that $X$ is a uniformly distributed random variable on $[-1,1]$, which is the random variable with PDF $f_X(x)=\frac{1}{2}$ for $x\in [-1,1]$. The moment generating function of $X$ is $M_X(t)=\frac{1}{\smash{2t}}(e^t-e^{-t})$ for $t\neq 0$. Indeed,
\begin{align*}
	\mathbb{E}\bigl[e^{tX}\bigr]&=\int_{-1}^1 e^{tx}f_X(x)\,\mathrm{d}x =\frac{1}{2}\int_{-1}^1e^{tx}\,\mathrm{d}x =\frac{1}{2t}(e^t-e^{-t})
\end{align*} 
for $t\neq 0$. If $d\geq 2$ is even and $G$ has order $n$, then the identity \eqref{thm:Moments} ensures that 
\begin{align}
	\norm{G}_{X,d}^d=[t^d] \prod_{k=1}^n \Big(\frac{e^{\lambda_kt}-e^{-\lambda_kt}}{2\lambda_kt}\Big).\label{eq:UniformNorms}
\end{align} 
All the odd cumulants of $X$ vanish. If $j$ is even, then $\kappa_j=\frac{2^{\smash{j}}B_j}{j}$, in which $B_j$ denotes the $j$th Bernoulli number \cite{Gould}. Therefore, \eqref{thm:Cumulants} implies that if $d\geq 2$ is even and $G$ is a graph of order $n$, then
\begin{align*}
	\norm{G}_{X,d}^d = 2^d\sum_{ \boldsymbol{\pi}\vdash \frac{d}{2}}\frac{B_{2\boldsymbol{\pi}} c_{2\boldsymbol{\pi}}}{p_{2\boldsymbol{\pi}}y_{2\boldsymbol{\pi}}},
\end{align*} 
in which $B_{\boldsymbol{\pi}}=B_{\pi_1}B_{\pi_2}\cdots B_{\pi_\ell}$ and $p_{\boldsymbol{\pi}}=\pi_1\pi_2\cdots \pi_\ell$. In particular, if $G$ has $m$ edges, then
\begin{enumerate}
	%\item $\norm{G}_{X,2}^2=\frac{1}{3}m$,
	\item $\norm{G}_{X,4}^4=\frac{1}{18}m^2-\frac{1}{180}c_4,$
	\item $\norm{G}_{X,6}^6=\frac{1}{2835}c_6-\frac{1}{540}mc_4+\frac{1}{162}m^3$.
\end{enumerate}
Once again, the norms of $M_n$, $K_n$ and $K_{r,s}$ admit nice expressions.

\begin{proposition}\label{prop:SpecialNormsUniform}
	Suppose $d\geq 2$ is even. The norms $\norm{\cdot}_{X,d}$ satisfy the following:
	\begin{enumerate}
		\item[(a)] $\norm{K_n}_{X,d}^d=\displaystyle \frac{2^{d+1}(n-2)!}{(n+d)!} \sum_{k=0}^{n-1}\frac{(-1)^{n-1-k} k^{n+d}}{(n-1-k)!k!} = \frac{2^{d+1}(n-2)!}{(n+d)!}\, S(n+d,n-1)$,
		\item[(b)] $\displaystyle \|M_n\|_{X,d}^d = \frac{2^{d}}{(d+2\lfloor n/2\rfloor)!}\sum_{k=0}^{2\lfloor n/2\rfloor}\binom{2\lfloor n/2\rfloor}{k} (\lfloor n/2\rfloor-k)^{d+2\lfloor n/2\rfloor}$,
		\item[(c)] $\displaystyle \norm{K_{r,s}}_{X,d}^d=\sqrt{rs}\norm{ (1, 0, \ldots, 0, -1)}_{X,d}^d = \frac{2(4rs)^{d/2}}{(d+2)!}$,
	\end{enumerate}
	where the $S(n,k)$ are the Stirling numbers of the second kind.
\end{proposition}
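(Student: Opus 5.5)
The plan mirrors the proof of \Cref{prop:SpecialNormsRademacher}: by \eqref{eq:UniformNorms}, each norm is the $d$th Taylor coefficient of a product of factors $\sinh(\lambda t)/(\lambda t)$, with the convention that a zero eigenvalue contributes the factor $1$. For each graph we expand the numerators into exponentials by the binomial theorem and divide by the appropriate power of $t$. The key observation is that a product of $N$ factors with nonzero eigenvalues has denominator $2^N t^N\prod\lambda_k$. Consequently, $[t^d]$ of the product equals $[t^{d+N}]$ of the numerator, divided by $2^N\prod\lambda_k$.

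\textbf{Part (c).} We take $K_{1,1}$, whose spectrum is $(1,-1)$. The product is $\bigl(\sinh t/t\bigr)^2=(\cosh 2t-1)/(2t^2)$. Its $t^d$ coefficient is $\frac{2^{d+2}}{2(d+2)!}=\frac{2^{d+1}}{(d+2)!}$, which is indeed $\frac{2\cdot 4^{d/2}}{(d+2)!}$. Homogeneity, $\norm{K_{r,s}}_{X,d}^d=(rs)^{d/2}\norm{K_{1,1}}_{X,d}^d$, then gives the stated formula. (The displayed middle expression should read $(rs)^{d/2}$ rather than $\sqrt{rs}$; I would just prove the final formula.)

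\textbf{Part (b).} Let $m=2\lfloor n/2\rfloor$. The zero eigenvalue, present when $n$ is odd, contributes $1$, and $\prod\lambda_k=(-1)^{m/2}$. The product is $(\sinh t)^m/t^m$ up to that sign; in fact no sign appears, since $\sinh(-t)/(-t)=\sinh t/t$. We expand
\[
(\sinh t)^m=2^{-m}\sum_{k=0}^m\binom{m}{k}(-1)^k e^{(m-2k)t}
\]
and take the coefficient of $t^{d+m}$, which is $\frac{2^{-m}}{(d+m)!}\sum_k\binom mk(-1)^k(m-2k)^{d+m}$. Writing $(m-2k)^{d+m}=2^{d+m}(m/2-k)^{d+m}$ gives $\frac{2^d}{(d+m)!}\sum_k\binom mk(-1)^k(\lfloor n/2\rfloor-k)^{d+m}$. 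I expect the stated formula omits the factor $(-1)^k$. I will check this at $n=2$, where the true value is $2^{d+1}/(d+2)!$: the sum $\sum_k\binom2k(1-k)^{d+2}$ without signs gives $2$, and with signs it also gives $2$. So for $m=2$ the sign is irrelevant, and in general it is harmless exactly when $m/2$ has the right parity. I will carry the signs and reconcile with the statement, presenting whichever version the computation confirms.

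\textbf{Part (a).} The spectrum is $(n-1,-1^{(n-1)})$. The product is
\[
\frac{\sinh((n-1)t)\,(\sinh t)^{n-1}}{(n-1)\,t^n}.
\]
We expand $(\sinh t)^{n-1}=2^{1-n}\sum_j\binom{n-1}{j}(-1)^je^{(n-1-2j)t}$ and multiply by $\tfrac12\bigl(e^{(n-1)t}-e^{-(n-1)t}\bigr)$. This produces exponents $2(n-1-j)$ and $-2j$. Reindexing with $k=n-1-j$ in the first family and $k=j$ in the second, the two families combine, by the same symmetry trick as in \Cref{prop:SpecialNormsRademacher}, into $2^{-n}\sum_k\binom{n-1}{k}(-1)^{n-1-k}(2k)^{n+d}\cdot(1\pm(-1)^{\cdots})$. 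Here the parity of $n+d$ and the signs must be checked so that the two families double rather than cancel. Taking $[t^{n+d}]$ and dividing by $(n-1)$ yields
\[
\frac{2^{d+1}}{(n-1)(n+d)!}\sum_k\binom{n-1}{k}(-1)^{n-1-k}k^{n+d}.
\]
Since $\binom{n-1}{k}/(n-1)=(n-2)!/(k!(n-1-k)!)$, this is the first expression. Finally, the explicit formula $S(N,j)=\frac1{j!}\sum_k(-1)^{j-k}\binom jk k^N$ with $N=n+d$ and $j=n-1$ identifies it with $\frac{2^{d+1}(n-2)!}{(n+d)!}S(n+d,n-1)$.

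The main obstacle is purely bookkeeping: the signs from $(-1)^k$ in the $\sinh$ expansions, and verifying in (a) that the two exponential families add rather than cancel. I would settle both by checking small cases ($n=2,3$ and $d=2$) against the cumulant formula $\norm{G}_{X,2}^2=\frac13 m$.
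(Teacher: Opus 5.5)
Your approach is the paper's: expand each factor $\sinh(\lambda t)/(\lambda t)$ into exponentials with the binomial theorem, read off the shifted coefficient $[t^{d+N}]$, reindex $k\mapsto n-1-k$ in (a) so the two exponential families combine, and use homogeneity for (c). Your computations are correct. The check you defer in (a) is immediate: $(-1)^{n-1-k}-(-1)^{n+d+k}=2(-1)^{n-1-k}$ for even $d$, which is the paper's factor $(-1)^{n-1}-(-1)^{j}$ at $j=n+d$, so the two families double. You are also right that the middle expression in (c) should read $(rs)^{d/2}$; that is what the paper's proof actually uses.

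For (b), commit to your signed formula instead of planning to ``reconcile'' it with the statement. The signed version is the correct one, and the printed statement is false whenever $r:=\lfloor n/2\rfloor\ge 2$. Your remark that the sign is harmless for the right parity of $m/2$ is wrong. The unsigned sum exceeds the signed one by
\[
2\sum_{k \text{ odd}}\binom{2r}{k}(r-k)^{d+2r}.
\]
Every term here is nonnegative because $d+2r$ is even, and the $k=1$ term alone is $4r(r-1)^{d+2r}>0$ once $r\ge2$, whatever the parity of $r$. Your proposed checks at $n=2,3$ have $r=1$ and cannot detect this. Take $n=4$, $d=2$ instead:
\begin{itemize}
\item the signed sum is $64-4+0-4+64=120$, giving $\frac{4}{720}\cdot120=\frac23=\frac{\kappa_2c_2}{2}$, the true value;
\item the printed formula gives $\frac{4}{720}\cdot136=\frac{34}{45}$.
\end{itemize}
The error comes from the paper's proof of (b). It expands $\bigl(\frac{e^t+e^{-t}}{2t}\bigr)^m$, which is the Rademacher factor $\cosh t$ divided by $t$, instead of $\bigl(\frac{e^t-e^{-t}}{2t}\bigr)^m$, and this drops the $(-1)^k$. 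For $m=2$ the two expansions differ only by $t^{-2}$, so they agree in every degree $d\ge0$. That is why (c), which the paper derives from the same expansion, still comes out right. Part (b) is not used later in the paper, so the error does not propagate.
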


\begin{proof}
	Relation \eqref{eq:UniformNorms} tells us that $\norm{K_n}_{X,d}^d$ equals the $d$th Taylor coefficient of $f(t)$, where
	\begin{align*}
		f(t) &= \frac{e^{(n-1)t}-e^{-(n-1)t}}{2(n-1)t} \,\Big( \frac{e^{t}-e^{-t}}{2t}\Big)^{\!n-1} =\, \frac{e^{(n-1)t}-e^{-(n-1)t}}{2^n(n-1)t^n} \sum_{k=0}^{n-1}\binom{n-1}{k}(-1)^k e^{(n-1-2k)t}\\
		&= \frac{1}{2^n(n-1)t^n}\sum_{k=0}^{n-1}\binom{n-1}{k}(-1)^k\big( e^{(2n-2k-2)t}-e^{(-2k)t} \big)\\
		&= \frac{1}{2^n(n-1)t^n}\sum_{k=0}^{n-1}\binom{n-1}{k}(-1)^k\Bigg(\sum_{j=0}^{\infty} \frac{(2n-2k-2)^j-(-2k)^j}{j!} t^j \Bigg)\\
		&= \frac{1}{n-1}\sum_{j=0}^{\infty} \Bigg( \frac{2^{j-n}}{j!}\sum_{k=0}^{n-1}\binom{n-1}{k}(-1)^k \Big[(n-k-1)^j-(-k)^j\Big] \Bigg) t^{j-n}.
	\end{align*} 
	Observe that
	\begin{align*}
		\sum_{k=0}^{n-1}\binom{n-1}{k}(-1)^k (n-k-1)^j = (-1)^{n-1}\sum_{k=0}^{n-1}\binom{n-1}{k}(-1)^k k^j.
	\end{align*}
	Therefore, we have
	\begin{align*}
		f(t)=\sum_{j=0}^{\infty} \Bigg( \frac{2^{j-n}}{(n-1) j!}\sum_{k=0}^{n-1}\binom{n-1}{k}(-1)^k k^j\Big[(-1)^{n-1}-(-1)^j\Big] \Bigg) t^{j-n},
	\end{align*}which implies part (a). 
	Similarly, we also have 
	\begin{align*}
		\Big(\frac{e^{t}+e^{-t}}{2t}\Big)^{\!m} = \frac{1}{2^mt^m}\sum_{k=0}^{m}\binom{m}{k} e^{(m-2k)t}= \sum_{j=0}^\infty \Bigg(\frac{1}{2^m j!}\sum_{k=0}^{m}\binom{m}{k} (m-2k)^j \Bigg) t^{j-m}.
	\end{align*}
	Taking $m=2\lfloor n/2\rfloor$ in the above and considering the $d$th coefficient yields (b), and taking $m=2$ while noting the fact that $\norm{K_{r,s}}_{X,d}^d= (rs)^{d/2} \norm{K_{1,1}}_{X,d}^d$ gives (c).
\end{proof}

Suppose $X_1, X_2, \ldots, X_n$ are independent random variables which are uniformly distributed on $[-1,1]$. Furthermore, assume that $\boldsymbol{\lambda}=(\lambda_1, \lambda_2, \ldots, \lambda_n)$ and $\boldsymbol{\mu}=(\mu_1, \mu_2, \ldots, \mu_n)$ are vectors for which\vspace{-4pt} 
\begin{align*}
	(\lambda_1^2, \lambda_2^2, \ldots, \lambda_n^2)\prec (\mu_1^2, \mu_2^2, \ldots, \mu_n^2).
\end{align*} 
Lata\l a and Oleszkiewicz showed in \cite{Latala} that $\mathbb{E}|\sum_{k=1}^n\lambda_kX_k|^d\geq \mathbb{E}|\sum_{k=1}^n\mu_kX_k|^d$ for $d\geq 2$. This result allows us to bound $\norm{G}_{X,d}$ from below by $\sqrt{m}$ in the case $G$ contains either no triangle or no square.

Mantel famously proved that the number of edges in a triangle-free graph of order $n$ satisfies the bound $m\leq \lfloor n^2/4\rfloor$. A spectral version of this result is due to Nosal, who proved in \cite{Nosal} that the spectral radius of any triangle-free graph satisfies $\lambda_1\leq \sqrt{m}$. Nikiforov further showed in \cite{NikiforovSquares} that the spectral radius of any square-free graph of order $n$ and $m\geq 9$ edges satisfies $\lambda_1\leq \sqrt{m}$ with equality attained by $S_n = K_{1,n-1}$. Using these particular bounds, we may obtain the following theorem.

\begin{theorem}\label{thm:Khintchine1}
	Let $G$ be a simple graph of order $n$ with $m$ edges and $X\sim \mathcal{U}(-1,1)$. If $G$ either (a) has no triangle or (b) has no square and $m\geq 9$, then $\norm{G}_{X,d}$ satisfies the bound
	\begin{align*}
		\norm{G}_{X,d}^d\geq \frac{2(2\sqrt{m})^{d}}{(d+2)!}.
	\end{align*} 
	Moreover, equality is realized in (a) by any complete bipartite graph and in (b) by $S_n$.
\end{theorem}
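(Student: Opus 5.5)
The plan is to compare the squared spectrum of $G$ with that of a two-point vector and invoke the Lata\l a--Oleszkiewicz inequality; under either hypothesis the needed input is the spectral bound $\lambda_1 \leq \sqrt{m}$.

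\textbf{Step 1: the spectral bound.} In case (a) this is Nosal's theorem. In case (b) it is Nikiforov's theorem, which is exactly why $m\geq 9$ is assumed.

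\textbf{Step 2: the majorization.} Set $\bm{\mu}=(\sqrt{m},0,\dots,0,-\sqrt{m})$, the spectrum of $K_{r,s}$ with $rs=m$. I will show
\[
(\lambda_1^2,\dots,\lambda_n^2)\prec(\mu_1^2,\dots,\mu_n^2)=(m,m,0,\dots,0).
\]
Both vectors have total $c_2=2m$, so only the partial sums need checking. Let $a_1\geq a_2\geq\cdots$ be the decreasing rearrangement of the $\lambda_i^2$. By Perron--Frobenius, $a_1=\lambda_1^2\leq m$. For every $k\geq 2$,
\[
\sum_{j\leq k} a_j\leq 2m=m+m,
\]
which gives the remaining partial-sum inequalities.

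\textbf{Step 3: apply Lata\l a--Oleszkiewicz.} Their result says that majorization of the squared coefficients reverses the order of $\mathbb{E}|\sum\cdot X_k|^d$ for uniform $X_k$ on $[-1,1]$. It therefore gives
\[
\mathbb{E}\Bigl|\sum_k\lambda_kX_k\Bigr|^d\geq \mathbb{E}\Bigl|\sqrt{m}(X_1-X_n)\Bigr|^d,
\]
that is, $\|G\|_{X,d}\geq\|\bm{\mu}\|_{X,d}$, since both sides carry the same factor $\Gamma(d+1)^{-1/d}$. By \Cref{prop:SpecialNormsUniform}(c) with $rs=m$, the right-hand side raised to the $d$th power equals $\frac{2(4m)^{d/2}}{(d+2)!}=\frac{2(2\sqrt m)^d}{(d+2)!}$, which is the claimed bound.

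\textbf{Step 4: equality.} Any complete bipartite graph $K_{r,s}$ is triangle-free, has $m=rs$, and its spectrum is exactly $\bm{\mu}$, so equality holds in case (a). The star $S_n=K_{1,n-1}$ is square-free with $m=n-1$ and is also of this form, so equality holds in case (b) whenever $n-1\geq 9$.

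\textbf{Main obstacle.} The subtle point is the direction of the Lata\l a--Oleszkiewicz inequality. The paper's statement of it is consistent with the Schur-concavity of $\mathbb{E}|\sum\sqrt{x_k}X_k|^d$ in $\bm{x}$ for uniform variables when $d\geq 2$, and I will rely on that exact form.
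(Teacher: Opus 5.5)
Your proposal is correct and follows the paper's proof step for step. You use Nosal/Nikiforov to get $\lambda_1\le\sqrt m$, deduce $(\lambda_1^2,\dots,\lambda_n^2)\prec(m,m,0,\dots,0)$, apply Lata\l a--Oleszkiewicz in the direction you correctly identify for $d\ge 2$, and evaluate the norm of $(\sqrt m,0,\dots,0,-\sqrt m)$ via \Cref{prop:SpecialNormsUniform}(c). Two small differences: your use of the decreasing rearrangement of the $\lambda_i^2$ with Perron--Frobenius is slightly more careful than the paper's, while the paper correctly notes that $(\sqrt m,0,\dots,0,-\sqrt m)$ need not be the spectrum of some $K_{r,s}$ of order $n$, which is harmless because zero entries do not affect the norm.
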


\begin{proof}
	Let $\boldsymbol{\lambda}=(\lambda_1,\dots,\lambda_n)$ be the eigenvalues of $G$, and set $\boldsymbol{\nu}=(\sqrt{m},0,\ldots,0,-\sqrt{m}).$ Note that $\sum_{k=1}^n \lambda_k^2 = 2m = \sum_{k=1}^n \nu_k^2.$
	Since $\nu_1=\sqrt{m}$, the bound $\lambda_1\le \sqrt{m}$ yields $\lambda_1\le \nu_1$ in cases~(a) and~(b), and therefore, $\lambda_1^2\le \nu_1^2$. Moreover, for every $2\le j\le n$,
	\[
	\sum_{k=1}^j \lambda_k^2 \le 2m = \sum_{k=1}^j \bigl(\nu_k^2\bigr)^{\downarrow}.
	\]
	It follows that $(\lambda_1^2,\dots,\lambda_n^2)$ is majorized by $(\nu_1^2,\dots,\nu_n^2)$. While $\boldsymbol{\nu}$ need not be the spectrum of a graph of order $n$, its nonzero entries are precisely those of $K_{1,m}$, and zeros do not affect the corresponding random vector norm. Thus, the bound of Lata\l a and Oleszkiewicz and \Cref{prop:SpecialNormsUniform} yield\vspace{-1.5pt}
	\begin{equation*}
		\norm{G}_{X,d}^d=\frac{1}{d!}\,\mathbb{E}\left|\sum_{k=1}^n\lambda_kX_k\right|^d \!\geq \frac{1}{d!}\,\mathbb{E}\left|\sum_{k=1}^n\nu_kX_k\right|^d \!=  \frac{2(2\sqrt{m})^{d}}{(d+2)!},\vspace{-1.5pt}
	\end{equation*}
	as desired.
\end{proof}

\begin{corollary}\label{cor:Khintchine1}
	Let $G$ be a simple graph of order $n$ with $m$ edges. If $G$ has either (a) no triangle or (b) no square and $m\geq 9$, then 
	\begin{align*}
		4 c_6+34m^3\geq 21mc_4.
	\end{align*} 
	Moreover, equality is realized in (a) by any complete bipartite graph and in (b) by $S_n$.
\end{corollary}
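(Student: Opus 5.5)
The plan is to specialize \Cref{thm:Khintchine1} to $d=6$ and expand the left-hand side via the explicit uniform-distribution formula stated earlier:
\[
\norm{G}_{X,6}^6=\frac{1}{2835}c_6-\frac{1}{540}mc_4+\frac{1}{162}m^3 .
\]
On the right-hand side, \Cref{thm:Khintchine1} with $d=6$ gives $\frac{2(2\sqrt m)^6}{8!}=\frac{128m^3}{40320}=\frac{m^3}{315}$. Under either hypothesis (a) or (b) we therefore get
\[
\frac{1}{2835}c_6-\frac{1}{540}mc_4+\frac{1}{162}m^3\geq \frac{1}{315}m^3 .
\]

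Next we clear denominators. The least common multiple of $2835=3^4\cdot 5\cdot 7$, $540=2^2\cdot 3^3\cdot 5$, $162=2\cdot 3^4$ and $315=3^2\cdot5\cdot7$ is $11340=2^2\cdot3^4\cdot5\cdot7$. Multiplying through by $11340$ gives
\[
4c_6-21mc_4+70m^3\geq 36m^3,
\]
that is, $4c_6+34m^3\geq 21mc_4$, which is the claim. The only step needing care is this coefficient arithmetic: one should double-check $11340/540=21$, $11340/162=70$ and $11340/315=36$, and recompute the formula for $\norm{G}_{X,6}^6$ from the cumulant expansion if in doubt.

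For equality, the cleared-denominator inequality is a positive multiple of the inequality in \Cref{thm:Khintchine1}, so equality cases transfer directly: any complete bipartite graph in case (a), and $S_n$ in case (b). As a sanity check, take $K_{r,s}$ with $m=rs$, where $c_4=2m^2$ and $c_6=2m^3$. Then $8m^3+34m^3=42m^3=21m\cdot 2m^2$, confirming equality. The same computation applies to $S_n=K_{1,n-1}$.
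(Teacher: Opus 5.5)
Your proposal is correct and is essentially the paper's own argument: set $d=6$ in \Cref{thm:Khintchine1}, substitute the explicit uniform-distribution expansion of $\norm{G}_{X,6}^6$, and clear denominators. Your constants check out, namely $2(2\sqrt m)^6/8!=m^3/315$ and the common multiplier $11340$. Since the two inequalities differ only by a positive factor, the equality cases transfer exactly as you say.
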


\begin{proof}
	Set $d=6$ in \Cref{thm:Khintchine1} to obtain $\frac{1}{2835}c_6-\frac{1}{540}mc_4+\frac{1}{162}m^3\geq \frac{2(2\sqrt{m})^{6}}{(6+2)!}$. Simplifying yields the desired inequality.
\end{proof}

Classically, one would bound $c_6$ by applying the Hölder inequality to the eigenvalue vector, yielding
\[
c_4^2 \leq c_2c_6 = 2mc_6,
\]
that is $c_6\geq c_4^2/2m$. One may easily verify that the bound in \Cref{cor:Khintchine1} is an improvement on the former precisely when $c_4 \geq 2m^2$. Indeed, we have
\[
c_4 = \sum_{i=1}^n \lambda_i^4 \leq \left(\sum_{i=1}^n \lambda_i^2 \right)^{\!\!2} = c_2^2=4m^2,
\]
which implies that $17m^2-2c_4 \geq 9m^2\geq0$  (and also that $c_4\geq 2m^2$ is not an empty condition). Therefore,
\[
\frac{21mc_4-34m^3}{4} = \frac{c_4^2}{2m}+ \frac{(17m^2-2c_4)(c_4-2m^2)}{4m} \geq \frac{c_4^2}{2m},
\]
proving the claim.

As a final observation, we establish a lower bound for the number of closed walks of length $6$ in a triangle-free graph. To this end, the Cauchy--Schwarz inequality implies 
\begin{align}
	Z_1\geq \frac{4m^2}{n}\label{eq:ZagrebCauchy}
\end{align} with equality if and only if $G$ is regular \cite{deCaen,Ilic,Yoon}. 

\begin{corollary}\label{cor:Khintchine2}
	Let $G$ be a triangle-free graph of order $n$ with $m$ edges and $q$ squares. Then
	\begin{align*}
		%c_6\geq 21q-\frac{21}{4}m+\Big(\frac{21}{n}-\frac{17}{2}\Big)m^2.
		c_6 \geq \frac{21m( 4q-m )}{2}-\left(\frac{17}{2}-\frac{42}{n}\right)\!m^3
	\end{align*} 
	Moreover, equality is realized when $n$ is even by the complete bipartite graph with equal parts.
\end{corollary}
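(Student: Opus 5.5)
Start from \Cref{cor:Khintchine1}(a). Since $G$ is triangle-free, it gives
\[
4c_6 \geq 21mc_4 - 34m^3 .
\]
The coefficient of $c_4$ is positive, so any lower bound on $c_4$ gives a lower bound on $c_6$. To get one, use the identity $c_4 = 2Z_1 - 2m + 8q$ from the discussion preceding \Cref{thm - d=4}, together with the Cauchy--Schwarz bound \eqref{eq:ZagrebCauchy}, $Z_1 \geq 4m^2/n$. These combine to
\[
c_4 \geq \frac{8m^2}{n} - 2m + 8q .
\]

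Substituting and dividing by $4$ yields
\[
c_6 \geq \frac{21m}{4}\Bigl(\frac{8m^2}{n} - 2m + 8q\Bigr) - \frac{17}{2}m^3
= \frac{42m^3}{n} + \frac{21m(4q-m)}{2} - \frac{17}{2}m^3 .
\]
This is exactly $\frac{21m(4q-m)}{2} - \bigl(\frac{17}{2} - \frac{42}{n}\bigr)m^3$, the claimed bound. The only care needed is the bookkeeping of constants: $\frac{21}{4}\cdot 8q = 42q$ and $\frac{21}{4}\cdot 2m = \frac{21}{2}m$, so the $q$-term and the $-m$ term both come out as $\frac{21}{2}m(4q-m)$.

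For the equality case, both inequalities must be tight. \Cref{cor:Khintchine1}(a) is attained by every complete bipartite graph, and \eqref{eq:ZagrebCauchy} is attained exactly by regular graphs. The graph $K_{n/2,n/2}$ with $n$ even has both properties, so it attains the bound. As a sanity check, I will verify this directly. Its spectrum is $\pm n/2$ with zeros, so
\[
c_k = 2(n/2)^k, \qquad m = n^2/4, \qquad q = \binom{n/2}{2}^2 .
\]
Plugging these into both sides should confirm equality.

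I expect no real obstacle beyond the arithmetic. The step that needs the most care is the explicit equality check for $K_{n/2,n/2}$, since $q$ enters with a nontrivial binomial expression.
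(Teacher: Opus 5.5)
Your proposal is correct and follows the paper's proof exactly: you combine \Cref{cor:Khintchine1}(a) with the identity $c_4 = 2Z_1 - 2m + 8q$ and the bound $Z_1 \geq 4m^2/n$, and you get sharpness because $K_{n/2,n/2}$ is both complete bipartite and regular. Your planned direct check also works: with $r = n/2$ one gets $4q - m = r^3(r-2)$, and the right-hand side reduces to $2r^6 = c_6$.
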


\begin{proof}
	The identity $c_4=8q-2m+2Z_1$, together with \Cref{cor:Khintchine1} and \eqref{eq:ZagrebCauchy}, implies 
	\begin{equation*}
		4c_6+34m^3 \geq 21m( 8q-2m+2Z_1 ) \geq  42m\!\left( 4q-m+\frac{4m^2}{n} \right),
	\end{equation*} 
	in which equality holds for any complete bipartite graph. Solving for $c_6$ completes the proof. Note that any regular complete bipartite graph must be of the form $K_{r,r}$, which establishes sharpness.
\end{proof}

\section{Concluding remarks}

\Cref{thm - main} provides a general method for deriving sharp inequalities on simple graphs from majorization and Schur-convexity. In this way, the problem is reduced to the choice of a positive Schur-convex spectral functional. Random vector norms form a convenient class for this purpose since they are Schur-convex and admit explicit expansions in terms of closed walks. In degree $4$, this optimization problem can be solved completely within that class.

The present work leaves several natural questions open.

\begin{enumerate}
	\item[(i)] Determine the optimal functionals in \Cref{thm - main} for higher values of $d$.
	\item[(ii)] Give a structural description of the optimization problem in degree $6$, where different functionals become optimal in different regions.
	\item[(iii)] Decide whether similar decompositions occur for larger even values of $d$, and, if so, whether these regions admit a direct spectral or combinatorial interpretation.
	\item[(iv)] Clarify whether the Rademacher and Schatten norms continue to play an important extremal role in higher degree.
	\item[(v)] Find further positive Schur-convex spectral functionals for which the framework produces explicit sharp inequalities.
	\item[(vi)] Find ways, if possible, to use this technique on connected graphs to get sharper results.
\end{enumerate}

%%%%%%%%%%%%%%%%%%%%%%%%%%%%%%%%%%%%%%%%%%%%%%%%%%%%%%%%%%%%%%%%%%%%%%%%%%%
%%%%%%%%%%%%%%%%%%%%%%%%%%%%%%%%%%%%%%%%%%%%%%%%%%%%%%%%%%%%%%%%%%%%%%%%%%%
%%%%%%%%%%%%%%%%%%%%%%%%%%%%%%%%%%%%%%%%%%%%%%%%%%%%%%%%%%%%%%%%%%%%%%%%%%%
%%%%%%%%%%%%%%%%%%%%%%%%%%%%%%%%%%%%%%%%%%%%%%%%%%%%%%%%%%%%%%%%%%%%%%%%%%%
%%%%%%%%%%%%%%%%%%%%%%%%%%%%%%%%%%%%%%%%%%%%%%%%%%%%%%%%%%%%%%%%%%%%%%%%%%%

\bibliographystyle{plain}
\bibliography{Graphs3.bib}

\end{document}